\newtheorem{thm}{Theorem}[section]
\newtheorem{theorem}{Theorem}
\newtheorem{cor}[theorem]{Corollary}
\newtheorem{lm}[thm]{Lemma}
\newtheorem{pr}[thm]{Proposition}
\theoremstyle{definition}
\newtheorem{df}[thm]{Definition}
\theoremstyle{remark}
\newtheorem{rem}[thm]{Remark}
\newcommand{\cftf}{$C(4)$--$T(4)$}
\newcommand{\cftfs}{$C(4)$--$T(4)$ }
\newcommand{\cs}{$C(6)$}
\newcommand{\css}{$C(6)$ }
\newcommand{\ctts}{$C(3)$--$T(6)$}
\newcommand{\cttss}{$C(3)$--$T(6)$ }
\newcommand{\aot}{\cs, \cftf, and \ctts}
\newcommand{\aots}{\cs, \cftf, and \cttss}
\newcommand{\aotors}{\cs, \cftf, or \cttss}
\begin{document}
	
	\title[Torsion subgroups of small cancellation groups]{Torsion subgroups of small cancellation groups}

	\author[Karol Duda]{Karol Duda$^{\dag}$}
	\address{Faculty of Mathematics and Computer Science,
	University of Wroc\l aw\\
	pl.\ Grun\-wal\-dzki 2,
	50--384 Wroc\-{\l}aw, Poland}
	\email{karol.duda@math.uni.wroc.pl}
	\thanks{$\dag$ Partially supported by (Polish) Narodowe Centrum Nauki, UMO-2018/31/G/ST1/02681.}

	\begin{abstract}
	We prove that torsion subgroups of groups defined by \aotors small cancellation presentations are finite cyclic groups.
	This follows from a more general result on the existence of fixed points for locally elliptic (every element fixes a point) actions of groups on simply connected small cancellation complexes. We present an application concerning automatic continuity. We observe that simply connected \cttss complexes may be equipped with a CAT$(0)$ metric. This allows us to get stronger results on locally elliptic actions in that case. It also implies that the Tits Alternative holds for groups acting on simply connected \cttss small cancellation complexes with a bound on the order of cell stabilisers.
	\end{abstract}
	
	\maketitle

	\section{Introduction}
	The small cancellation theory is a classical powerful tool for constructing examples of groups with interesting features, as well as for exploring properties of well-known groups. It might be seen as a bridge between the combinatorial and the geometric group theories, and as one of the first appearances of nonpositive curvature techniques in studying groups. Constituting a classical part of mathematics, small cancellation techniques are still being developed, having numbers of variations, and have also been used in proving new important results nowadays. We direct the reader to the classical book of Lyndon-Schupp~\cite{ls} for basics on small cancellation.
	
	Among many notions of small cancellation, the best known and the most deeply explored ones are the so-called ``combinatorial small cancellation conditions": \aot. Seeing small cancellation as an example of nonpositive curvature, and motivated by a number of well-known conjectures and open questions concerning torsion subgroups of ``nonpositively curved" groups we prove the following.
	
	\begin{theorem}
		\label{thm:tA}
		Torsion subgroups of groups defined by \aots small cancellation presentations are finite cyclic groups.
	\end{theorem}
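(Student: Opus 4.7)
The strategy is to deduce the theorem from the fixed-point result for locally elliptic actions advertised in the abstract. To the given \cftfs presentation of $G$ we associate the Cayley $2$-complex $X$: it is simply connected, inherits a \cftfs small cancellation structure, and carries an action of $G$ by combinatorial automorphisms. For a torsion subgroup $H\leq G$, the plan is to apply the fixed-point theorem to the restriction $H\curvearrowright X$ and then convert a global fixed point into finiteness of $H$ via a stabilizer argument.

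The first task is to verify that the action of $H$ on $X$ is locally elliptic, i.e., every element (equivalently, every finitely generated subgroup) fixes a point. Since $H$ is torsion, each $h\in H$ generates a finite cyclic subgroup, so this reduces to proving that any finite group of combinatorial automorphisms of a simply connected \cftfs complex has a fixed point. The natural approach is an analogue of the Cartan/Bruhat--Tits argument: take a finite orbit of the finite group and extract a canonical combinatorial center, invoking nonpositive-curvature features guaranteed by the \cftfs condition (e.g., convexity of distance functions, or an induction on the radius of a minimal enclosing ball).

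Granting local ellipticity, the main fixed-point theorem produces a point $p\in X$ fixed by $H$, so $H$ is contained in the $G$-stabilizer $G_p$. It remains to show $G_p$ is finite. Because $G$ acts freely on the vertices of $X$, any element of $G_p$ acts faithfully on the (finite) link of the unique minimal cell containing $p$; hence $G_p$ embeds into a finite permutation group and is therefore finite, so $H\leq G_p$ is finite. I expect the main obstacle to be the finite fixed-point property underlying local ellipticity, which is where the \cftfs geometry genuinely enters; once it is secured, the rest is a routine application of the fixed-point theorem together with a standard link argument.
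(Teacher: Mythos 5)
Your overall architecture matches the paper's: pass to the Cayley complex, show the torsion subgroup $H$ acts locally elliptically, invoke the fixed-point theorem (Theorem \ref{thm:tB}), and finish with a freeness/finiteness argument at the fixed cell. The final step is fine (the paper argues directly with the finite boundary of the fixed $2$-cell and freeness on vertices, which is the same as your link argument in substance). The problem is the middle step, and you have correctly identified it as the obstacle --- but your proposed way around it does not work and is where the genuine gap lies.

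You reduce local ellipticity to the claim that ``any finite group of combinatorial automorphisms of a simply connected \cftfs complex has a fixed point,'' to be proved by a Cartan/Bruhat--Tits center-of-a-finite-orbit argument. This begs the question: for a cyclic group $\langle h\rangle$ of finite order, ``$\langle h\rangle$ has a fixed point'' \emph{is} the local ellipticity of $h$, so you are proposing to prove the hypothesis of Theorem \ref{thm:tB} by a statement at least as strong as Theorem \ref{thm:tB} itself restricted to finite groups. Moreover, the center argument you sketch requires metric convexity of distance functions (or a well-behaved circumcenter), which \cftfs complexes do not carry; the Helly-type fixed-point results that do exist in this setting require local finiteness or geometric actions, precisely the hypotheses the paper is working to avoid. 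The paper's actual route is algebraic, not geometric: \cftfs presentations are aspherical, and by Huebschmann's theorem every element of finite order in a group with an aspherical presentation is conjugate to a power $z_r^{q_r/s}$ of the root of some relator $r=z_r^{q_r}$. Such an element visibly stabilizes the corresponding $2$-cell of the Cayley complex, so every element of a torsion subgroup fixes a point, and local ellipticity is immediate. Without this (or some substitute for it), your argument does not get off the ground; with it, the rest of your outline goes through as you describe.
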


	The theorem may be seen as a small cancellation counterpart of a (still) conjectural feature of CAT$(0)$ groups (see \cite{Swe99}). Note however that in our result we allow the presentations to be infinite. 
	We direct the reader to an extensive discussion in \cite{HaeOsa} on torsion subgroups and related \emph{locally elliptic} (that is, every element fixes a point) actions on ``nonpositively curved" complexes. Theorem~\ref{thm:tA} establishes a particular case of \cite[Meta-Conjecture and Conjecture]{HaeOsa}.

	In fact, it is conjectured in \cite{HaeOsa} that locally elliptic actions of finitely generated groups on finitely dimensional nonpositively curved complexes (in particular, on simply connected small cancellation complexes) have global fixed points. 
	This problem is strongly related to various other questions concerning nonpositively curved groups, in particular to the automatic continuity (see Theorem~\ref{thm:tC} below) and to the Tits Alternative (see e.g.\ \cite{OsPrz21,OsPrz22} for recent advancements concerning the Tits Alternative for small cancellation groups and for further references).
	Our approach goes along the same path: Theorem~\ref{thm:tA} is an immediate consequence of the following result.

	\begin{theorem}
		\label{thm:tB}
		Let $X$ be a simply connected \aotors small cancellation complex. Let $G$ be a group acting on $X$ by automorphisms such that the action induces a free action on the $1$-skeleton $X^{1}$ of $X$.
		If the action is locally elliptic, then $G$ is a finite cyclic group.
		In particular, $G$ fixes a $2$-cell of $X$.
	\end{theorem}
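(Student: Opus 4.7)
The plan is to reduce the fixed-point problem to a Helly-type statement for finite subgroups, and then extract finiteness from the structure of the stabilizer of a $2$-cell. First I would exploit the hypothesis that the action on $X^{1}$ is free: any nontrivial $g \in G$ with a fixed point cannot fix any point of $X^{1}$, so its fixed-point set lies inside the interiors of $2$-cells. Since $g$ sends $2$-cells to $2$-cells and each $2$-cell is a topological disk, $g$ must stabilize a single $2$-cell $F_{g}$ and act on it as a topological rotation about a unique fixed point $x_{g}$; in particular every element of $G$ has finite order equal to the order of this rotation, and the fixed-point set of each nontrivial element is a singleton.

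The heart of the argument is a Helly-type claim: every finitely generated subgroup of $G$ fixes a point. I would prove this by induction on the number of generators, the two-generator case being the crucial one. Given $g, h \in G$ with rotation centers $x_{g} \in F_{g}$ and $x_{h} \in F_{h}$, local ellipticity provides fixed points for all elements of $\langle g, h \rangle$, notably for $gh$, $g^{-1}h$, and their powers. I would analyze a minimal (reduced) disk diagram connecting $F_{g}$ to $F_{h}$ along an appropriate combinatorial geodesic and use the $C(4)$--$T(4)$ curvature conditions---specifically the control on interior vertex degrees and on piece lengths---to constrain how two rotations about distinct centers can jointly generate a locally elliptic group. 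The intended outcome is to force $F_{g}=F_{h}$ and $x_{g}=x_{h}$, yielding a common fixed point. This disk-diagram combinatorial step, which must substitute for the usual CAT(0) circumcenter argument in a setting without genuine nonpositive curvature, is the main obstacle and where the $C(4)$--$T(4)$ hypothesis is really used.

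Once every finitely generated subgroup $H \leq G$ has a (necessarily unique) fixed point $x_{H}$, these points are consistent under inclusion of subgroups: if $H_{1} \leq H_{2}$ then $x_{H_{2}} \in \mathrm{Fix}(H_{1})$, so $x_{H_{1}} = x_{H_{2}}$. Writing $G$ as the directed union of its finitely generated subgroups, all of the corresponding singletons agree, producing a global fixed point $x$ for $G$. This establishes the first assertion. For the finiteness claim, $x$ lies in the interior of some $G$-invariant $2$-cell $F$; the $G$-action on $F$ fixes $x$ and restricts to a free action on $\partial F \subset X^{1}$, so $G$ embeds into the (finite cyclic) group of rotations of $\partial F$ about $x$. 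Hence $G$ is finite, and in fact cyclic.
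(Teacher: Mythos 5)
Your overall skeleton --- every nontrivial element with a fixed point is a ``rotation'' of some $2$-cell and hence of finite order; reduce to two elements with distinct fixed cells; derive a contradiction; conclude that all of $G$ fixes one $2$-cell and is finite because it acts freely on that cell's boundary --- matches the paper's. The directed-union argument for non-finitely-generated $G$ and the final finiteness step are fine. But the proof is not complete: the two steps that carry all of the mathematical content are asserted or explicitly deferred rather than proved. First, ``$g$ must stabilize a \emph{single} $2$-cell $F_g$ \ldots and the fixed-point set of each nontrivial element is a singleton'' does not follow from ``$g$ permutes $2$-cells and each $2$-cell is a disk'': a priori $g$ could act as a rotation on several distinct $2$-cells simultaneously. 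Ruling this out is the paper's Lemma \ref{fixv}, and it already requires the full machinery (choice of a power $k$ with $F\cap g^kF=\{v\}$ for all squares $F$ at $v$ in the quadrization, plus a Gauss--Bonnet argument on a minimal-area disc diagram). Second, and more seriously, your ``heart of the argument'' --- forcing $F_g=F_h$ for two elliptic elements via a minimal disc diagram --- is exactly the theorem's content in the two-generator case, and you acknowledge you have not supplied it. The paper's Sections \ref{qd}--\ref{inford} exist precisely for this step: one passes to the quadrization $Y$, shows minimal-area disc diagrams there are CAT(0) square complexes (Proposition \ref{quad}), proves a curvature-redistribution lemma for singular diagrams (Lemma \ref{sng}), and carries out a delicate ``double ladder with a cap'' analysis (Proposition \ref{dlwc}, Lemmas \ref{laddy} and \ref{lad}) to show that if $\mathrm{Fix}(f)\neq\mathrm{Fix}(g)$ then suitable powers give an infinite-order element $f^kg^l$ (Lemma \ref{inf}), contradicting local ellipticity. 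Without an argument of comparable substance, your proposal is a plan rather than a proof.

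One smaller structural remark: the paper does not try to show directly that the two fixed cells coincide; instead it exhibits an element of infinite order, which is incompatible with local ellipticity plus freeness on $X^1$ (Lemma \ref{tr}). This indirect route is what makes the disc-diagram combinatorics tractable, since one only needs to show that a certain concatenation of translated geodesics can never close up (via a curvature count on its would-be filling), rather than to analyze the group generated by two rotations. There is also a preliminary reduction you omit: $1$-cells not on the boundary of any $2$-cell must be thickened so that the quadrization sees the whole complex.
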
 
	
	Although we believe that some versions of Theorem~\ref{thm:tB} hold without the assumptions on the freeness of the action, in the current statement these cannot be omitted. For example, it has been shown by Serre \cite[Theorem~15, Chapter I.6.1]{Serre_Trees} that any countable infinitely generated group acts without fixed points on a tree, which is a $1$-dimensional small cancellation complex (the conjectures in \cite{HaeOsa} concern mostly finitely generated groups). 
	Furthermore, our assumptions are tailored for the application to Theorem~\ref{thm:tA}.
		
	In general, there is no known way of equipping a small cancellation complex with a ``reasonable" CAT$(0)$ structure (and some experts doubt it can be done at all). Nevertheless, we observe that \cttss complexes admit such a structure.
	This is a relatively simple observation following a remark by Pride \cite[p.165]{Pride}. Although the next result implies a number of significant features of \cttss groups, it seems it has not been observed before.
	
	\begin{theorem}
	\label{thm:tF}
	Let $X$ be a simply connected \cttss  small cancellation complex. Then there exists a metric on $X$ turning it into
	a CAT$(0)$ triangle complex $\mathfrak{X}$ such that every automorphism of $X$ induces an automorphism of $\mathfrak{X}$.
	\end{theorem}

	The existence of a CAT$(0)$ structure allows us to extend Theorem~\ref{thm:tB} in the \cttss case, using results of \cite{NOP-D}.

	\begin{cor}\label{thm:tE}
	A finitely generated group acting locally elliptically on a simply connected \cttss  small cancellation complex fixes a point.
	\end{cor}

	From the CAT$(0)$ property of \cttss complexes we also conclude the following result closely related to the non-existence of infinite torsion subgroups. Recall that a group satisfies the \emph{Tits Alternative} if each of its finitely generated subgroups either contains a free nonabelian subgroup or is virtually solvable. It is believed that ``nonpositively curved" groups satisfy the Tits Alternative, but this has been proved only in a limited number of cases. See \cite{SW2005,OsPrz21,OsPrz22} for more details. The following theorem states that the Tits Alternative holds for groups acting \emph{almost freely} (there is a bound on the order of cell stabilisers) on \cttss complexes.		
	
	\begin{cor}
	\label{thm:tD}
	Let $G$ be a group acting almost freely on a simply connected \cttss small cancellation complex.
	Then $G$ is virtually cyclic, or virtually $\mathbb{Z}^2$, or contains a nonabelian free group.
\end{cor} 	

	Finally, let us present an application of Theorem~\ref{thm:tA} to \emph{automatic continuity}. This property has its origins in the descriptive set theory and, roughly, says that every group homomorphism between topological groups is continuous. Recently automatic continuity has been established for a large class of homomorphisms into ``nonpositively curved" groups, see e.g.\ \cite{keppeler2021automatic} -- we extend some results of that paper in the following, where $G$ is equipped with a discrete topology. 
	
	\begin{theorem}
		\label{thm:tC}
		Let $G$ be a group acting geometrically on a locally finite, simply connected \aotors small cancellation complex $X$ such that the action induces a free action on the $1$-skeleton of $X$. If $H$ is a subgroup of $G$ then any group homomorphism $\varphi : L \rightarrow H$ from a locally compact group $L$ is continuous or there exists a normal open subgroup $N\subseteq L$ such that $\varphi(N)$ is a finite group.
	\end{theorem}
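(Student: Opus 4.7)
The plan is to assemble three already-existing ingredients rather than to build anything new: the Helly-graph action coming from \cite{Helly}, the finiteness of torsion subgroups given by Theorem~\ref{thm:tA}, and the automatic continuity machinery of \cite[Corollary~C]{keppeler2021automatic}. The strategy is first to recognize that $H$ falls inside a class for which Keppeler's dichotomy is known, then to convert the qualitative output of that dichotomy into the statement we want by invoking Theorem~\ref{thm:tA}.

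More concretely, the first step is to promote the hypotheses on $X$ to the assertion that $H$ is a Helly group. Because $H$ acts geometrically on the locally finite, simply connected \cftfs complex $X$, the main result of \cite{Helly} provides a geometric action of $H$ on a Helly graph, so $H$ is a Helly group. The second step is to observe that, because the action of $H$ on $X^{1}$ is free, the quotient $X/H$ equipped with a choice of orientations and attaching words for its $2$-cells produces a \cftfs small cancellation presentation of $H$; thus Theorem~\ref{thm:tA} applies and every torsion subgroup of $H$ is finite. In particular every torsion subgroup of the subgroup $G \leq H$ is finite as well.

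The third step is to feed $H$, $G$, and $\varphi \colon L \to G$ into \cite[Corollary~C]{keppeler2021automatic}. That statement is designed precisely for subgroups of Helly groups and yields the advertised dichotomy: either $\varphi$ is continuous, or there is a normal open subgroup $N \trianglelefteq L$ such that $\varphi(N)$ is a torsion subgroup of $G$. Combined with the finiteness of torsion subgroups of $G$ from step two, this gives Theorem~\ref{thm:tC} directly (indeed it upgrades the conclusion, since $\varphi(N)$ is then automatically finite).

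The main (and essentially only) point to verify is that the hypotheses of \cite[Corollary~C]{keppeler2021automatic} really are met by the pair $(G, H)$: one must check that Keppeler's formulation applies to arbitrary subgroups of Helly groups and not only to Helly groups themselves, and that no additional regularity on $L$ beyond local compactness is needed. Since this is the exact setting in which that corollary is stated, the check is expected to be routine, and no substantive new argument is required.
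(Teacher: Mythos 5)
Your overall architecture matches the paper's: establish that $H$ is Helly via \cite{Helly}, verify that torsion subgroups are finite, and invoke \cite[Corollary C]{keppeler2021automatic}. The genuine gap is in your second step. You claim that, because $H$ acts freely on $X^{1}$, the quotient $X/H$ yields a \cftfs small cancellation presentation of $H$, so that Theorem~\ref{thm:tA} applies. This does not work: freeness on the $1$-skeleton does not give freeness on all of $X$ (indeed, the central phenomenon of the paper is that torsion elements fix points in the interiors of $2$-cells), so $X\to X/H$ need not be a covering map and $X/H$ need not even be a combinatorial $2$-complex of the required type. Moreover, even when the action happens to be free, $X/H$ is a complex with many vertices rather than a one-vertex presentation complex, collapsing a spanning tree does not in general preserve the \cftfs conditions, and there is no reason for the given $X$ to be the Cayley complex of any presentation of $H$. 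The paper sidesteps all of this by using Theorem~\ref{thm:tB} instead of Theorem~\ref{thm:tA}: a torsion subgroup of $H$ acts locally elliptically on $X$ itself, with the induced free action on $X^{1}$, hence is finite by Theorem~\ref{thm:tB} (and finite groups are artinian). You should replace your quotient construction with this direct application of Theorem~\ref{thm:tB}.

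A secondary, lesser issue: you treat \cite[Corollary C]{keppeler2021automatic} as applying unconditionally to subgroups of Helly groups, with the finiteness of torsion subgroups serving only to ``upgrade'' the conclusion afterwards. As the paper uses it, that corollary has the artinian condition on torsion subgroups as a hypothesis; the finiteness you establish is precisely what verifies that hypothesis, so it is an input to the corollary rather than a refinement of its output. Since you do establish finiteness before invoking the corollary, this is a matter of logical bookkeeping, not a further gap.
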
	

\subsection*{Related results} 
	For finitely generated groups acting on uniformly locally finite simply connected \cftfs complexes a version of Theorem~\ref{thm:tB} has been proved recently in \cite[Corollary B(3)]{HaeOsa}.
The proof there uses the fact (established in \cite{Helly}) that groups acting geometrically on simply connected \cftfs small cancellation complexes are Helly.
In particular, in the case of finitely presented groups the \cftfs part of Theorem~\ref{thm:tA} follows from \cite[Corollary B(3)]{HaeOsa}. We do not
need such finiteness assumptions for our proof. As for further results establishing the Conjecture and, more generally, the (Meta-)Conjecture from \cite{HaeOsa} in specific cases of small cancellation let us mention the case of CAT$(0)$ square complexes (an example of \cftf) following from \cite{Sageev1995,leder_varghese}, the case of $2$-dimensional systolic complexes (an example of $C(3)$--$T(6)$) and other $2$-dimensional CAT$(0)$ complexes from \cite{NOP-D}, the case of (graphical) $C(18)$ complexes from \cite[Theorem F]{HaeOsa}, and the case of $C'(1/4)$--$T(4)$ complexes from \cite{genevois2016coningoff}.

Whether the Tits Alternative holds for small cancellation groups is an open problem. In \cite{Col73,ALj77,EH88}  the Weak Tits Alternative is shown, see \cite{SW2005} for a discussion. 

The \cftfs case of Theorem~\ref{thm:tC} has been established in \cite[Corollary H]{HaeOsa}.

\subsection*{Idea of the proof of the Theorem \ref{thm:tB}} 
We restrict here to \cs, and \cftfs cases. First, we show that each element of the group $G$ fixes the center of exactly one $2$-cell. Therefore $G$ does not have elements of infinite order, as such elements would not act freely on the $1$-skeleton of $X$. 
Then we show that if $G$ acts on $X$ without a global fixed point, then $G$ has an element of infinite order, contradiction. 

To prove that latter implication, consider two elements $f,g\in G$ with $\mathrm{Fix}(f)\neq \mathrm{Fix}(g)$. 
We consider a ``dual" complex $Y$ of $X$: it is the quadrization in the \cftfs case, and the Wise complex in the \css case. These complexes are: quadric and systolic, respectively.
In particular, $Y$ is simply connected and $G$ acts by automorphisms on $Y$. 

Both $\mathrm{Fix}_Y(f)$ and $\mathrm{Fix}_Y(g)$ consist of one vertex each, we denote them by $x$ and $y$. 
Since $Y$ is connected, we can find a geodesic $\gamma := (x_0 :=y, x_1,\ldots, x_n:=x)$ in $Y$. 
We find $k,l$ such that $x_1$ (resp. $x_{n-1}$) is at distance $2$ (\cftfs case) or $3$ (\css case) from $g^lx_1$ (resp. $f^kx_{n-1}$) in the link of $y$ (resp. $x$).
For such $k,l$ and any $i$, we show that the path $\alpha_i := \bigcup\limits_{0\leq j\leq i}(f^kg^l)^j(\gamma\cup f^k\gamma)$ is a geodesic. Therefore $f^kg^l$ has infinite order.

\subsection*{Structure of the paper}
In Section \ref{sc} we give a brief introduction to small cancellation theory.
Sections \ref{qd}-\ref{inford} concern the case of \cftfs and \css complexes.
In Section \ref{qd} we define the quadrization of a complex and show properties of quadrizations of \cftfs complexes. 
In Section \ref{sys} we define and show properties of the Wise complexes of \css complexes. 
In Section \ref{cv} we discuss properties of curvature of a CAT(0) square and simplicial disc diagrams.
In Sections \ref{negcuv} and \ref{rotat} we prove technical lemmas about \cftfs and \css complexes, necessary for Section \ref{inford}. 
In Section \ref{inford} we show that if a group $G$ acts on a simply connected \cftfs or \css complex $X$ as in Theorem \ref{thm:tB} then the lack of a global fixed point implies that there is an element of infinite order in $G$.
Section \ref{catct} concerns the case of \cttss complexes, in particular proofs of Theorem \ref{thm:tF} and Corollaries \ref{thm:tE}-\ref{thm:tD} are in this section.
In Section \ref{sec: GASCC} we finish the proof of Theorem \ref{thm:tB} and prove Theorem \ref{thm:tC}.
In Section \ref{sec: PTA} we prove Theorem \ref{thm:tA}.
\subsection*{Acknowledgements} The author would like to thank Daniel Danielski, Nima Hoda, Damian Osajda and Motiejus Valiunas whose comments led to many corrections and improvements.

\section{Basic definitions}
The purpose of this section is to give basic definitions and terminology regarding combinatorial $2$-complexes. We follow the notation of \cite{MW}.
For fundamental notions such as \textit{CW}-\textit{complexes}, \textit{nullhomotopy} and \textit{simple connectedness}, see Hatcher's textbook on algebraic topology \cite{AH}. In this paper, we consider only $2$-dimensional CW-complexes and we will refer to them as ``$2$-complexes''. Throughout this section, we assume that $X$ and $Y$ are $2$-dimensional CW-complexes.

A map from $X$ to $Y$ is \textit{combinatorial} if it is a continuous map whose restriction to every open cell $e$ of $X$ is a homeomorphism from $e$ to an open cell of $Y$. A complex is \textit{combinatorial} if the attaching map of each of its $n$-cells is combinatorial for a suitable subdivision of the sphere $\mathbb{S}^{n-1}$. An \textit{immersion} is a combinatorial map that is locally injective.

Unless stated otherwise, all maps and complexes are combinatorial, and all attaching maps are immersions.

A \textit{polygon} is a $2$-disc with a cell structure that consists of $n$ $0$-cells, $n$ $1$-cells and a single $2$-cell. For any $2$-cell $C$ of a $2$-complex $X$ there exists a map $R\rightarrow X$ where $R$ is a polygon and the attaching map for $C$ factors as $\mathbb{S}^1\rightarrow \partial R \rightarrow X$. Because of that, by a \textit{cell}, we will mean a map $R\rightarrow X$ where $R$ is a polygon. 

Another important notion in this paper are \textit{simplicial} complexes. These square complexes are $n$-dimensional complexes whose $n$-cells are $n$-simplices. As already mentioned, in this paper we only consider $2$-complexes. In the case of simplicial complexes, instead of the usual terms, $0$-cell, $1$-cell and $2$-cell, we will use vertex, edge and triangle, respectively.

\sloppy Let $X$ be a simplicial complex and $L$ be a subcomplex of $X$ with vertices $\{u_1,\ldots, u_{n},v_1,\ldots, v_{n+1}\}$,
and the set of edges consisting of the edges of the form $(u_i,u_{i+1}),(v_i,v_{i+1})$, $(u_i,v_i)$ and $(u_i,v_{i+1})$.
We call such a complex $L$ a \textit{ladder of length} $n$ and denote it by $\{u_1,\ldots, u_n|v_1,\ldots, v_{n+1}\}$.

\begin{figure}[H]
\begin{center}
\includegraphics[scale=1.2]{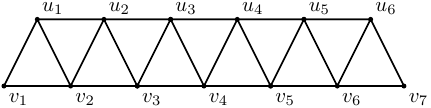}
\end{center}
\caption{Ladder of length $6$.}\label{fig:ladt}
\end{figure}

Another important for us type of complexes are \textit{square} complexes. Square complexes are $2$-complexes whose $2$-cells are $4$-gons. In this case, instead of the usual terms, $0$-cell, $1$-cell and $2$-cell, we will use vertex, edge and square, respectively.

Let $X$ be a square complex and $L$ be a subcomplex of $X$ with vertices $\{u_1,\ldots, u_n,v_1,\ldots, v_n\}$,
and the set of edges consisting of the edges of the form $(u_i,u_{i+1}),(v_i,v_{i+1})$ and $(v_i,u_i)$.
We call such a complex $L$ a \textit{ladder of length} $n$ and denote it by $\{u_1,\ldots, u_n|v_1,\ldots, v_n\}$.

\begin{figure}[H]
\begin{center}
\includegraphics[scale=1.2]{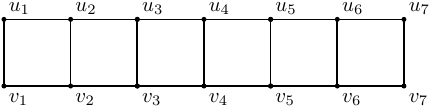}
\end{center}
\caption{Ladder of length $7$.}\label{fig:lads}
\end{figure}

A \textit{path} in $X$ is a combinatorial map $P\rightarrow X$ where $P$ is either a subdivision of the interval or a single $0$-cell. For given paths $P_1\rightarrow X$ and $P_2\rightarrow X$ such that the terminal point of $P_1$ is equal to the initial point of $P_2$, their \textit{concatenation} is the natural path $P_1P_2\rightarrow X$. 
Similarly, a \textit{cycle} is a map $C\rightarrow X$ where $C$ is a subdivision of a circle $\mathbb{S}^1$. 
It is clear that a closed path, i.e. a path such that the initial point is equal to the terminal point, is a cycle.
We will often identify paths and cycles with their images in the complex $X$.

A \textit{disc diagram} is a contractible finite $2$-complex $D$ with a specified embedding into the plane. The \textit{area} of diagram $D$ is the number of its $2$-cells. If $D$ is a disc diagram, then the diagram $D$ \textit{in} $X$ is $D$ along with a combinatorial map from $D$ to $X$ denoted by $D\rightarrow X$. 

We say that a disc diagram $D$ is \textit{nonsingular} if it is homeomorphic to a closed $2$-cell and \textit{singular} otherwise. If a disc diagram $D$ is singular, then it is either trivial, or it consists of a single $1$-cell joining two $0$-cells, or it contains a \textit{cut} $0$-\textit{cell (or cut-vertex)}, i.e.~a $0$-cell $v$ of $D$ such that $D\setminus\{v\}$ is disconnected.

A \textit{boundary cycle} $\partial D$ of $D$ is the attaching map of the $2$-cell that contains the point $\infty$ when we regard $\mathbb{S}^2=\mathbb{R}^2\cup \{\infty\}$.

We also have to state some notions concerning graphs.

A \textit{graph} is a $1$-dimensional CW-complex with $0$-cells called \textit{vertices} and $1$-cells called \textit{edges}. Distances between vertices in graphs are always measured by the \textit{standard graph metric} which is defined for a pair of vertices $u$ and $v$ as the number of edges in the shortest path connecting $u$ and $v$. 

Graph $\Gamma$ is \textit{simplicial} if there is no edge in $\Gamma$ with both endpoints attached to one vertex and no two edges of $\Gamma$ having their endpoints attached to the same unordered pair of vertices.
Let $\Gamma$ be a graph with the set of vertices $V$ and the set of edges $E$. Let $V'$ be a subset of the set of vertices $V$. The subgraph $\Gamma'$ of $\Gamma$ consisting of the set of vertices $V'$ and the set of edges $E':=\{(u,v)|u,v\in V', (u,v)\in E\}$ is called the \textit{subgraph induced by} $V'$.

Among class of simplicial graphs we distinguish so called \textit{bipartite} graphs: a graph $\Gamma$ is bipartite if the set of its vertices can be partitioned into two nonempty sets such that no edge has both of its endpoints in the same set. 

A \textit{link} of a $0$-cell $v$ of a $2$-complex $X$ (denoted $X_v$) is the graph whose vertices correspond to the ends of $1$-cells of $X$ incident to $v$, and an edge joins vertices corresponding to the ends of $1$-cells $e_1, e_2$ iff there is a $2$-cell $F$ such that $e_1,e_2 \in \partial F$.

\section{Small cancellation}\label{sc}

The following definition is crucial in the small cancellation theory.
\begin{df}
Let $X$ be a combinatorial $2$-complex. A non-trivial path $P\rightarrow X$ is a \textit{piece} of $X$ if there are $2$-cells $R_1$ and $R_2$ such that $P\rightarrow X$ factors as $P\rightarrow R_1\rightarrow X$ and as $P\rightarrow R_2\rightarrow X$ but there does not exist a homeomorphism $\partial R_1\rightarrow \partial R_2$ such that
there is a commutative diagram:

$$
  \begin{tikzcd}
    P \arrow{r} \arrow{d} & \partial R_2 \arrow{d} \\
    \partial R_1 \arrow{r} \arrow[swap]{ur} & X
  \end{tikzcd}
$$
\end{df}

Intuitively, a piece of $X$ is a path which is contained in boundaries of $2$-cells of $X$ in at
least two distinct ways.

\begin{df}
Let $X$ be a $2$-complex. A disc diagram $D\rightarrow X$ is \textit{reduced} if for every piece $P\rightarrow D$ the composition $P \rightarrow D\rightarrow X$ is a piece in $X$.  
\end{df}

The following theorem is known as the Lyndon-van Kampen lemma \cite[Lemma 2.17]{MW}.

\begin{thm}\label{lvk}
If $X$ is a $2$-complex and $P\rightarrow X$ is a nullhomotopic closed path, then there exists a reduced disc diagram $D\rightarrow X$ such that $P\rightarrow D$ is the boundary cycle of $D$, and $P\rightarrow X$ is the composition $P\rightarrow D\rightarrow X$.
\end{thm}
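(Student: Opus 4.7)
The plan is to split the proof into two stages: first producing some, possibly non-reduced, disc diagram with boundary cycle $P$, and then replacing it by a reduced one of minimal area.

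For the first stage I would use the nullhomotopy hypothesis directly. Since $P\to X$ is nullhomotopic, it extends to a continuous map $f:\mathbb{D}^2\to X$. By cellular approximation relative to $\partial\mathbb{D}^2$, I can homotope $f$ so that its image lies in the $2$-skeleton (which equals $X$) and its restriction becomes combinatorial with respect to a suitable CW-structure on $\mathbb{D}^2$. Together with $f$, this CW-structure on the disc is a disc diagram $D_0\to X$ with boundary cycle $P\to X$. An alternative route, more in the spirit of \cite{LynSch,MW}, is to start from an expression of the word read along $P$ as a product of conjugates of attaching cycles of $2$-cells of $X$ and to assemble $D_0$ by concatenating the corresponding ``lollipops''.

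For the second stage I would choose among all disc diagrams $D\to X$ realising $P$ as boundary cycle one of minimal area, and claim it is reduced. Suppose not: then some piece $P'\to D$ has the property that the composition $P'\to D\to X$ is not a piece in $X$. Unpacking the definition yields two $2$-cells $R_1,R_2$ of $D$ meeting along $P'$ together with a homeomorphism $\varphi:\partial R_1\to\partial R_2$ which commutes with the maps to $X$. I would excise the open $2$-cells $R_1$ and $R_2$ from $D$ and glue their boundaries via $\varphi$, obtaining a combinatorial $2$-complex $D'$ of area $\mathrm{Area}(D)-2$ equipped with a combinatorial map to $X$ and with the original boundary cycle $P$ intact. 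This contradicts minimality, so the minimiser must already be reduced.

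The main obstacle is the surgery in the second stage: one must check that $D'$ is again a disc diagram, that is, a compact contractible planar subspace of $\mathbb{S}^2$ with a combinatorial $2$-complex structure. A priori, identifying $\partial R_1$ with $\partial R_2$ could destroy planarity, disconnect the boundary from the rest, or yield a surface of higher genus. The standard remedy is to first slit $D$ open along a dual path joining $\partial R_1$ to $\partial R_2$, then perform the identification on this thickened diagram, and finally use the fact that $\varphi$ is an orientation-reversing boundary homeomorphism induced by a common $2$-cell of $X$ to conclude that the result embeds back into $\mathbb{S}^2$ as a disc. Making this topological bookkeeping rigorous, and verifying that the reduction of area is strict, is the delicate heart of the argument; once it is in place, induction on area finishes the proof.
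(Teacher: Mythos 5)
The paper does not actually prove this statement: it is quoted verbatim as \cite[Lemma~2.17]{MW} and used as a black box, so there is no in-paper argument to compare against. Your two-stage plan (produce some diagram, then pass to a minimal-area one and show minimality forces reducedness) is the standard proof of the Lyndon--van Kampen lemma, and its overall architecture is correct.

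Two places need more than you give them. First, in stage one, cellular approximation alone does not produce a \emph{combinatorial} map: a cellular map of a disc need not restrict to homeomorphisms on open cells, and upgrading it is genuinely awkward. The lollipop route you mention as an ``alternative'' is really the one to take: write the word read along $P$ as a product of conjugates of attaching words, build the wedge of lollipops (which is a singular disc diagram in the paper's sense), and fold; this gives $D_0$ with boundary cycle exactly $P$. Second, and more seriously, the excise-and-sew step is not merely ``delicate bookkeeping'' --- it is the entire content of the lemma, and you leave it as an acknowledged gap. Concretely: (a) when $\partial R_1$ and $\partial R_2$ meet outside the piece $P'$, identifying them via $\varphi$ can pinch off spherical components; the correct statement is that the result is a disc diagram with the same boundary cycle together with some sphere components, which one discards, so the area still drops by at least $2$ and induction applies; (b) the paper's definition of piece does not require $R_1\neq R_2$ as cells of $D$, so the degenerate case of a single $2$-cell of $D$ carrying the piece in two ways (a self-cancellable cell) must be handled separately, typically by cutting that cell out and sewing up the resulting slit. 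With those two points supplied, the argument is complete; as written, the decisive step is asserted rather than proved.
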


Let $\alpha$ be a nullhomotopic cycle in $X$. We say that disc diagram $D$ is a \textit{minimal area disc diagram for $\alpha$ in $X$} if $\alpha$ is a boundary cycle of $D$ and $D$ has the smallest area amongst all disc diagrams with boundary cycle $\alpha$. 

We will now define small cancellation conditions $C(p)$ and $T(q)$.

\begin{df}
Let $p$ be a natural number. We say that a $2$-complex $X$ satisfies the $C(p)$ \textit{small cancellation condition} provided that for each $2$-cell $R \rightarrow X$ its attaching map $\partial R\rightarrow X$ is not a concatenation of fewer than $p$ pieces in $X$.
\end{df}
In the following definition of $T(q)$ condition we will use the notion of \textit{valence} of a $0$-cell $v\in X$ in the complex $X$, i.e.~the number of ends of $1$-cells incident to it. We denote it by $\delta_{X}(v)$ and drop the subscript if it is clear from the context.
\begin{df}
Let $q$ be a natural number. We say that a $2$-complex $X$ satisfies the $T(q)$ \textit{small cancellation condition}  if there does not exist a reduced map $D\rightarrow X$ where $D$ is a disc diagram containing an internal $0$-cell $v$ such that $2<\delta(v)<q$.  
\end{df}

If a complex satisfies both $C(p)$ and $T(q)$ conditions, then we call it a $C(p)$--$T(q)$ complex. As mentioned already, in this paper we are working with \aotors complexes. We will now state some known properties of these complexes.

The following proposition is a known property of simply connected \css complexes \cite{OP18}.

\begin{lm}\label{ver}
Let $\widehat{x}_1,\widehat{x}_2,\ldots, \widehat{x}_n$ be a pairwise intersecting $2$-cells of a \css complex $X$. Then $\widehat{x}_1\cap \widehat{x}_2\cap\ldots\cap \widehat{x}_n$ is either a piece or a vertex.
\end{lm}

The following propositions are known properties of simply connected \cftfs complexes \cite[Proposition 3.5, 3.8]{hoda2019quadric}.

\begin{pr}\label{ctc}
Let $F_1$ and $F_2$ be a pair of intersecting $2$-cells of a simply connected \cftfs complex. Then the intersection $F_1\cap F_2$ is connected. 
\end{pr}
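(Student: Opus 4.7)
The plan is to argue by contradiction via the Lyndon--van Kampen Lemma together with a combinatorial curvature count. Suppose $F_1 \cap F_2$ has two distinct connected components $C_1$ and $C_2$. Using the cyclic structure of $\partial F_1$ and $\partial F_2$, I would first pick arcs $\alpha \subseteq \partial F_1$ and $\beta \subseteq \partial F_2$, each joining a vertex $p \in C_1$ to a vertex $q \in C_2$, whose interiors are disjoint from $F_1 \cap F_2$ and chosen on matching sides of the two cells so that $\alpha \cdot \beta^{-1}$ is a closed path in $X$. Since $X$ is simply connected, this loop is nullhomotopic, so by Theorem \ref{lvk} it bounds a reduced disc diagram $D \to X$, and I would fix one of minimum area.

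If $D$ contains no $2$-cells, then $D$ is a tree, and the boundary traversal forces $\alpha$ and $\beta$ to coincide as paths in $X$; but then the common image of $\alpha$ lies in $F_1 \cap F_2$ and joins $C_1$ to $C_2$, contradicting that they are distinct components. Otherwise, I would attach the $2$-cells $F_1$ and $F_2$ to $D$ along $\alpha$ and $\beta$, producing a larger disc diagram $D'$ whose boundary is the concatenation of the complementary arcs $\alpha' \subseteq \partial F_1$ and $\beta' \subseteq \partial F_2$. Reducedness of $D'$ is inherited from reducedness and minimality of $D$: any cancellable pair in $D'$ in which $F_1$ or $F_2$ participated would yield a strictly smaller reduced diagram with boundary $\alpha \beta^{-1}$, contradicting the choice of $D$. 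Then I would apply the combinatorial Gauss--Bonnet formula to $D'$, assigning angle $\pi/2$ at each corner of each $2$-cell. By $C(4)$, every $2$-cell of $D'$ has at least four pieces in its boundary and so contributes nonpositive curvature; by $T(4)$, every internal vertex has valence $\geq 4$ (the forbidden value being $3$) and so also contributes nonpositive curvature. The total curvature $2\pi$ required by Gauss--Bonnet for a disc must therefore be concentrated on $\partial D'$, with the only available sources being the boundary vertices on $\alpha'$ and $\beta'$ and the corners at $p, q$; a careful accounting shows these cannot supply $2\pi$, yielding the desired contradiction.

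The main obstacle I expect is the boundary curvature bookkeeping in the Gauss--Bonnet step: in particular, correctly evaluating the corners at $p$ and $q$ where $F_1$ and $F_2$ meet on $\partial D'$ (here the choice of $\alpha, \beta$ on matching sides is crucial to bound their interior angles below by $\pi$), and controlling the potentially many vertices in the interior of $\alpha'$ and $\beta'$ using the $C(4)$-bound on the number of corners of $F_1$ and $F_2$. Additional delicate points are the rigorous justification of reducedness of $D'$ and the treatment of degenerate situations where $\alpha$ or $\beta$ has length one, where $D$ has cut vertices, or where the chosen components of $F_1 \cap F_2$ are not single vertices but longer arcs; all of these should reduce to the generic case after a minimization argument.
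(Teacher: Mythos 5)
The paper does not actually prove this proposition: it quotes it from Hoda's work on quadric complexes (\cite[Proposition 3.5, 3.8]{hoda2019quadric}), so there is no in-paper argument to compare against. Your overall strategy --- suppose there are two components $C_1,C_2$, bound the loop $\alpha\beta^{-1}$ by a minimal reduced disc diagram $D$, cap it off with $F_1$ and $F_2$ to get $D'$, and run combinatorial Gauss--Bonnet --- is the standard route and can be made to work. But the two curvature claims on which your accounting rests both fail as stated, and they fail exactly where the content of the proof lies.

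First, it is not true that every $2$-cell of $D'$ contributes nonpositive curvature. The faces $F_1$ and $F_2$ carry the free boundary arcs $\alpha'$ and $\beta'$, which are not pieces; the $C(4)$ condition only controls decompositions of $\partial F_i$ into pieces, so nothing prevents $\alpha$ from being a single piece shared with a single cell of $D$, in which case $F_1$ is a two-sided boundary face of curvature $+\pi$ under the natural (sides-based) angle assignment. Thus $F_1$ and $F_2$ can jointly supply up to $2\pi$, the entire Gauss--Bonnet budget, and your assertion that the only positive contributions come from boundary \emph{vertices} is wrong. Second, $T(4)$ does not give internal valence $\geq 4$: it forbids valence $3$ but permits valence $2$ (vertices interior to a long shared arc), and under your ``$\pi/2$ at every corner'' convention such a vertex has curvature $+\pi$; to avoid this you must assign angle $\pi$ at vertices interior to maximal shared arcs and $\pi/2$ only at genuine corners between sides, i.e.\ count sides rather than edges --- which is precisely the convention under which $F_1,F_2$ become positively curved. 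Once both corrections are made the contradiction does close, but for a reason your sketch points away from: the positive curvature of $F_1$ and $F_2$ is absorbed at the junction vertices $p$ and $q$, where (in the nonsingular case) at least three corners meet --- one from $F_1$, one from $F_2$, and at least one from a cell of $D$ --- giving $\kappa(p),\kappa(q)\leq -\pi/2$ and pushing the total strictly below $2\pi$. Finally, your reducedness argument for $D'$ is incomplete: cancelling $F_1$ against a cell $R$ of $D$ does not yield a smaller diagram with boundary $\alpha\beta^{-1}$ (the boundary changes), so minimality of $D$ alone does not exclude such a pair; one must minimize over the choice of $(\alpha,\beta,D)$, or observe that such a cancellable pair lets you re-route $\alpha$ across $\partial R$ and delete $R$ from $D$.
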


\begin{pr}[Strong Helly Property] 
Let $F_1$, $F_2$ and $F_3$ be pairwise intersecting $2$-cells of a simply connected \cftfs complex. Then the intersection of some pair of these $2$-cells is contained in the remaining $2$-cell, i.e.~for some permutation $\sigma$ of the indices, we have

$$F_{\sigma(1)}\cap F_{\sigma(2)}\subset F_{\sigma(3)}.$$
\end{pr}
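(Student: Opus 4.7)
My plan is a proof by contradiction using the Lyndon--van Kampen lemma (Theorem \ref{lvk}) together with the $C(4)$ and $T(4)$ conditions. Suppose no permutation $\sigma$ yields the required inclusion. Then for each $i\in\{1,2,3\}$ there is a vertex $v_i\in F_j\cap F_k$ with $v_i\notin F_i$, where $\{i,j,k\}=\{1,2,3\}$. By Proposition \ref{ctc}, each pairwise intersection $A_{ij}:=F_i\cap F_j$ is a connected arc lying on both $\partial F_i$ and $\partial F_j$.

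The next step is to construct a closed combinatorial cycle $P$ in $X$ by choosing, for each $i$, a sub-arc of $\partial F_i$ connecting an endpoint of $A_{ij}$ to an endpoint of $A_{ik}$ along the side of $\partial F_i$ not containing $v_i$, and then concatenating these three sub-arcs cyclically. Since $X$ is simply connected, $P$ is nullhomotopic, and Theorem \ref{lvk} supplies a reduced disc diagram $E\to X$ with $\partial E=P$, which I would take of minimal area. I would then paste the three cells $F_1,F_2,F_3$ onto $E$ along the arcs $A_{12},A_{13},A_{23}$, obtaining a larger combinatorial disc diagram $D\to X$ in which the three arcs $A_{ij}$ become interior arcs.

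The contradiction is then extracted by a case split. If $F_1\cap F_2\cap F_3\neq\emptyset$, the three arcs $A_{ij}$ share a common vertex $v$, which becomes an interior vertex of $D$ incident to exactly three edges pointing outward into the three pasted cells; hence $\delta_D(v)=3$, contradicting $T(4)$. If $F_1\cap F_2\cap F_3=\emptyset$, the three arcs $A_{ij}$ are pairwise disjoint in $D$, and the innermost $2$-cell of $E$ adjacent to the union $A_{12}\cup A_{13}\cup A_{23}$ has its boundary decomposing into at most three pieces in $X$ (with each $A_{ij}$ contributing at most one piece), contradicting $C(4)$.

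The main obstacle will be the technical bookkeeping to ensure that the pasted diagram $D$ is a genuine reduced disc diagram rather than a singular or non-reduced one, so that the small cancellation hypotheses may be invoked directly. In particular, one must verify that minimality of $E$ together with the careful choice of the side of $\partial F_i$ avoiding $v_i$ prevents any $2$-cell of $E$ from being identifiable with one of the $F_i$ via a boundary homeomorphism; this is precisely the condition needed to rule out the pathologies hidden in the definition of \emph{piece} and of \emph{reduced} diagrams.
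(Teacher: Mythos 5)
First, a point of reference: the paper does not prove this proposition at all --- it is imported from \cite[Propositions 3.5, 3.8]{hoda2019quadric} --- so your argument has to stand on its own. Your overall strategy (fill the region between the three cells, paste $F_1,F_2,F_3$ onto the filling, and play the resulting reduced diagram against $C(4)$ and $T(4)$) is the standard and viable route to such Helly statements, but as written the proof has genuine gaps that go beyond bookkeeping. The construction of $P$ is not well defined: the clause ``the side of $\partial F_i$ not containing $v_i$'' is vacuous, since $v_i\notin F_i$ by its very definition, so neither complementary arc of $\partial F_i$ contains $v_i$. The actual content is to choose the arcs $P_i$ so that $P=P_1P_2P_3$ closes up and $D=E\cup F_1\cup F_2\cup F_3$ is a planar disc diagram, and this fails outright when $F_1\cap F_2\cap F_3$ contains an edge: that edge lies on all three arcs $A_{ij}$ and would need three $2$-cells glued along it. Relatedly, in your first case the triple intersection is the intersection of two arcs on the circle $\partial F_1$ and so may be a nontrivial path or even disconnected, not a single vertex; and even when it is a single vertex $v$, the count $\delta_D(v)=3$ needs each $A_{ij}$ to contain an edge at $v$ (if some $A_{ij}=\{v\}$ you are already done, since then $F_i\cap F_j\subset F_k$, but this has to be said).

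The second case has a step that does not parse: no $2$-cell of $E$ is adjacent to any $A_{ij}$, because those arcs separate the pasted cells $F_i$ from one another, not from $E$; so ``the innermost $2$-cell of $E$ adjacent to $A_{12}\cup A_{13}\cup A_{23}$'' does not exist. Your piece count only covers the degenerate situation where $E$ is a single $2$-cell. In general one needs a combinatorial Gauss--Bonnet argument over all of $D$ (in the spirit of the paper's Section \ref{cv}, but for \cftfs rather than square diagrams): a reduced \cftfs disc diagram must carry total boundary curvature at least $2\pi$, while $\partial D$ consists of only three exposed arcs, each contained in a single $2$-cell, which cannot supply it. Finally, reducedness of $D$ is indispensable for invoking either $C(4)$ or $T(4)$ and is only flagged, not established: minimality of $E$ for the fixed boundary $P$ does not rule out a cancelling pair formed by a cell of $E$ along $P_i$ and the pasted cell $F_i$, because removing such a pair alters $\partial E$ and destroys the pasted structure; one must either minimize area over all admissible choices of the arcs $P_i$ simultaneously, or show directly that such a cancellation forces one of the inclusions $A_{ij}\subset F_k$. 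Until these four points are repaired, the argument is not a proof.
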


In the case of \cttss complexes, we will use the fact that all pieces are short, which was first observed by Pride \cite{Pride} in the following Lemma.

\begin{lm}
If $X$ is a $T(q)$ complex for $q\geq 5$, then every piece in $X$ has length $1$.
\end{lm}

\section{Quadric complexes and quadrization of a complex}\label{qd}

In this section we define the quadrization of a complex and state some results concerning quadrizations of \cftfs complexes. We begin with some necessary notions concerning CAT(0) square complexes.

A \textit{CAT(0) square complex} is a square complex for which the metric obtained by making each square isometric to the regular Euclidean square of side length $1$ satisfies the CAT(0) condition, which is a metric nonpositive curvature condition concerning thinness of geodesic triangles. We use a combinatorial characterization of the CAT(0) condition for the square complexes which follows from Gromov's link condition, and take it as the definition.
\begin{df}
A square complex $X$ is CAT(0) if it is simply connected and for any $0$-cell $v\in X$ the shortest embedded cycle in the link of $v$ has length at least $4$.
\end{df}
If $X$ is a disc diagram, the CAT(0) condition means that each internal vertex is incident to at least four squares.

Let $X$ be a $2$-complex with embedded $2$-cells. Let $X_0, X_2$ be the sets of $0$-cells and $2$-cells of $X$. Let $\Gamma_X$ be a bipartite graph on the vertex set $X_0\cup X_2$ where an edge joins $v\in X_0$ with $F\in X_2$ whenever $v\in \partial F$. The following notion was introduced by N.Hoda.
\begin{df}\cite[Section 3.2]{hoda2019quadric}
The \textit{quadrization} $Y$ of a complex $X$ is the $4$-flag completion $Y=\overline{\Gamma_X}$ i.e.~a complex obtained from $\Gamma_X$ by spanning a $2$-cell on each of its nontrivial $4$-cycles. 
\end{df}

If we additionally assume that every $1$-cell of $X$ is contained in the boundary of a $2$-cell then simply connectedness of $X$ implies simply connectedness of its quadrization \cite[Lemma 3.9]{hoda2019quadric}.

All $2$-cells are squares, therefore the quadrization of a complex is a square complex. In this paper sometimes we will consider cells in both the complex $X$ and its quadrization $Y$ at the same time. In such cases we will denote by $\widehat{x}$ the $0$- or $2$-cell corresponding to the vertex $x$ from $Y$. 

A path $P\rightarrow Y$ will be denoted by $(x_1,\ldots,x_n)$, where $x_i$ are the vertices in $Y$ that are the images of vertices of $P$, in particular, $x_1,x_n$ are the images of ends. If a path is a piece, then we will denote it by $(x,x')$ where $x$ and $x'$ are its endpoints. A cycle $C\rightarrow Y$ will be denoted by $\langle x_1,\ldots,x_n\rangle$.

Each square $F$ in $Y$ has four vertices: two from $X_0$ and two from $X_2$. Boundary of $F$ is a cycle $\partial F=\langle x_1,x_2,x_3,x_4 \rangle$. We will use notation $F=[x_1,x_2,x_3,x_4]$. Observe that either $x_1,x_3\in X_0$ and $x_2,x_4\in X_2$, or $x_1,x_3\in X_2$ and $x_2,x_4\in X_0$; wherever possible we will use the first of these options. Let $F=[x_1,x_2,x_3,x_4]$ be a square in $Y$. Since $X$ is \cftf, the intersection of $\widehat{x}_2,\widehat{x}_4$ is connected and contains $\widehat{x}_1,\widehat{x}_3$. Therefore $(\widehat{x}_1,\widehat{x}_3)$ is a piece in $X$. On the other hand, if $(\widehat{x}'_1,\widehat{x}'_3)$ is a piece in $X$ contained in $\widehat{x}_2'\cap \widehat{x}_4'$, then  $F'=[x'_1,x'_2,x'_3,x'_4]$ is a square in $Y$.

In \cite{hoda2019quadric} N.Hoda proved that the $1$-skeleton of the quadrization of a simply connected \cftfs complex is a hereditary modular graph. In his paper Hoda called such complexes \textit{quadric}. We only need a part of his result. Hoda defines quadric complexes as simply connected square complexes satisfying, among others, the following conditions, called \textit{rules of replacement}.  

\begin{pr}\cite[Definition 1.1.]{hoda2019quadric}
Let $Y$ be the quadrization of the \cftfs complex $X$.
\begin{enumerate}[(A)]
\item If there are two squares $F_1,F_2\in Y$ such that $\partial(F_1\cup F_2)$ is a cycle of length $4$, then there is $F\in Y$ such that $\partial F=\partial(F_1\cup F_2)$.
\item If there are three squares $F_1,F_2,F_3\in Y$ such that $\partial(F_1\cup F_2\cup F_3)$ is a cycle of length $6$, then there exist $F,F'\in Y$ such that $\partial(F_1\cup F_2\cup F_3)=\partial(F\cup F')$, i.e.~this cycle has a diagonal that divides it into two $4$-cycles.
\end{enumerate}
\end{pr}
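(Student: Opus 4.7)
The plan is to prove each rule by locating the required $4$-cycles in the bipartite graph $\Gamma_X$ and then invoking the definition of $Y$ as the $4$-flag completion of $\Gamma_X$, with the only non-trivial input being the Strong Helly Property in case (B).

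For (A), the case $F_1 = F_2$ is trivial (take $F = F_1$), so assume $F_1 \neq F_2$. Since each square contributes four edges and $\partial(F_1 \cup F_2)$ has four edges, $F_1$ and $F_2$ share exactly two edges. By the bipartite structure of $\Gamma_X$ (with parts $X_0$ and $X_2$), two shared edges sitting as opposite sides of each square would force the squares to coincide. Hence the shared edges are adjacent, meeting at a common vertex $v_2$; write $F_1 = [v_1, v_2, v_3, v_4]$ and $F_2 = [v_1, v_2, v_3, w]$ with $v_1, v_3 \in X_0$ and $v_2, v_4, w \in X_2$. The four remaining edges form a nontrivial $4$-cycle $\langle v_4, v_1, w, v_3 \rangle$ in $\Gamma_X$ (all four vertices being distinct, since $v_4 \neq w$ and $\{v_1,v_3\}$ and $\{v_4,w\}$ lie in different parts), and the $4$-flag completion supplies the square $F = [v_1, v_4, v_3, w]$ with $\partial F = \partial(F_1 \cup F_2)$.

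For (B), write the boundary hexagon, which alternates between $X_0$ and $X_2$ vertices, as $\langle u_1, a_{12}, u_2, a_{23}, u_3, a_{31} \rangle$ with $u_i \in X_2$ and $a_{ij} \in X_0$. Each $a_{ij}$ is adjacent in $\Gamma_X$ to both $u_i$ and $u_j$, so $\widehat{a}_{ij} \in \widehat{u}_i \cap \widehat{u}_j$, and the three $2$-cells $\widehat{u}_1, \widehat{u}_2, \widehat{u}_3$ are pairwise intersecting. The Strong Helly Property yields, after relabeling, $\widehat{u}_1 \cap \widehat{u}_2 \subseteq \widehat{u}_3$; hence $\widehat{a}_{12} \in \widehat{u}_3$ and $(a_{12}, u_3)$ is an edge in $\Gamma_X$. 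Since $a_{12}$ and $u_3$ lie at cycle-distance three in the hexagon, this edge is a diagonal splitting the hexagon into two nontrivial $4$-cycles $\langle a_{12}, u_2, a_{23}, u_3 \rangle$ and $\langle a_{12}, u_3, a_{31}, u_1 \rangle$ in $\Gamma_X$. The $4$-flag completion then produces squares $F$ and $F'$ bounded by these $4$-cycles, and $\partial(F \cup F') = \partial(F_1 \cup F_2 \cup F_3)$.

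The main bookkeeping obstacle is the bipartite labeling: in (A), verifying that the two shared edges must be adjacent rather than opposite (the opposite case collapses to $F_1 = F_2$), and in (B), confirming that the diagonal produced by the Strong Helly Property sits opposite to $a_{12}$ in the hexagon so that it genuinely splits the hexagon into two $4$-cycles. Both facts are forced once the bipartite conventions are fixed, and no ingredient beyond the Strong Helly Property and the $4$-flag definition of $Y$ is needed.
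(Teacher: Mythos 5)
The paper does not prove this proposition: it is quoted from Hoda's work on quadric complexes, so there is no in-paper argument to compare yours against. Your proof is correct and self-contained, and it uses exactly the two inputs the paper has already recorded. For (A), the edge count $8-2k=4$ forces exactly two shared edges, and your bipartite argument correctly rules out the opposite-edges configuration (the closing edges $(v_2,v_4)$ or $(v_1,v_3)$ would join two vertices of the same part of $\Gamma_X$), so the shared edges are adjacent and the four outer edges form an embedded $4$-cycle of $\Gamma_X$, which is filled by definition of the $4$-flag completion; no small cancellation input is needed here. For (B), reading the alternating hexagon as three pairwise intersecting $2$-cells $\widehat{u}_1,\widehat{u}_2,\widehat{u}_3$, witnessed by the $\widehat{a}_{ij}$, and applying the Strong Helly Property is the right mechanism, and since your argument uses only the boundary hexagon you in fact prove the stronger statement that every embedded $6$-cycle of $\Gamma_X$ admits a diagonal. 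Two small points to make explicit. First, the Strong Helly Property as stated in the paper requires $X$ to be \emph{simply connected}; this hypothesis is omitted from the proposition you are proving but is present in every application in the paper, so you should either add it to the statement or flag where it enters (it is genuinely needed for (B), while (A) is purely definitional). Second, you should say in one line that the two $4$-cycles cut off by the diagonal, and the outer $4$-cycle in (A), are nontrivial because their vertices are distinct vertices of an embedded cycle, since only nontrivial $4$-cycles are spanned by squares in $Y$.
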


\begin{figure}[h]
\begin{center}
\includegraphics[scale=2.2]{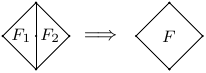}
\end{center}
\begin{center}
(A)
\end{center}
\begin{center}
\includegraphics[scale=2.2]{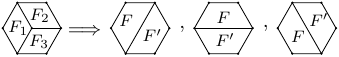}
\end{center}
\begin{center}
(B)
\end{center}
\caption{Replacement rules for quadric complexes}
\end{figure}

\begin{pr}\label{quad}
Let $Y$ be the quadrization of a \cftfs complex $X$ and $\alpha$ be some cycle in $Y$. If $D$ is a minimal area disc diagram for $\alpha$ in $Y$, then $D$ is a CAT(0) square complex.
\end{pr}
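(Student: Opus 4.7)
The plan is to verify the combinatorial CAT(0) condition for the square complex $D$: being a disc diagram, $D$ is automatically simply connected, and being built from squares of $Y$ it inherits the square complex structure, so it suffices to show that every internal $0$-cell of $D$ is incident to at least four squares, equivalently that its link (a simple cycle, since $v$ is internal in a disc diagram) has length at least~$4$.

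Suppose for a contradiction that some internal $0$-cell $v$ of $D$ has link cycle of length $k\in\{2,3\}$. Let $D_v\subseteq D$ denote the closed sub-disc consisting of the $k$ squares around $v$; a direct count shows that $\partial D_v$ is a cycle of length $2k$ in $D$, alternating between the $k$ neighbours of $v$ and the $k$ vertices opposite to $v$ in the respective squares.

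In the case $k=2$, the image of $\partial D_v$ in $Y$ is a closed combinatorial path of length $4$. Assuming for the moment that it is an embedded $4$-cycle in $Y$, the rule of replacement~(A) supplies a single square $F\in Y$ whose boundary coincides with this cycle. Cutting $D_v$ out of $D$ and regluing $F$ along $\partial D_v$ produces a disc diagram $D'$ with the same boundary $\alpha$ but strictly smaller area, contradicting the minimality of $D$. In the case $k=3$, the image of $\partial D_v$ in $Y$ is a closed path of length $6$; provided it is embedded, rule~(B) produces a diagonal splitting it into two $4$-cycles, each bounding a square in $Y$, and replacing $D_v$ in $D$ by this two-square sub-diagram again yields a disc diagram with boundary $\alpha$ and strictly smaller area, again contradicting minimality.

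The hard part will be to dispose of the degenerate cases, when the image of $\partial D_v$ in $Y$ is not an embedded cycle because the combinatorial map $D\to Y$ identifies some pair of distinct vertices of $D_v$. I plan to handle these by observing that any such identification either causes two adjacent squares of $D_v$ to share the same image square in $Y$ with opposite orientations---which contradicts the reducedness enforced by minimality of $D$---or else produces an even shorter closed nullhomotopic path in $Y$ around $v$, to which the rules of replacement~(A) and~(B) already apply on a strictly smaller configuration, giving the same area-reducing contradiction.
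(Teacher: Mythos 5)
Your argument is essentially the paper's: both reduce the combinatorial CAT(0) link condition at an internal vertex to the cases $k=2,3$ and apply the rules of replacement (A) and (B) to the boundary cycle of the $k$ squares around $v$, contradicting minimality of area. The degenerate identifications you flag are treated in the paper only for $k=2$ (two squares spanned by the same $4$-cycle would contradict the definition of the quadrization), and your sketch for handling them is consistent with that.
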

\begin{proof}
Suppose that there is an internal vertex $v$ in $D$ incident to $k<4$ squares. Since $v$ is internal, $k$ is either $2$ or $3$. Let $D'$ be the union of squares incident to $v$.

Suppose $k=2$. Let $F_1=[v,x_1,x_2,x_3], F_2=[v,x'_1,x'_2,x'_3]$. Since $v$ is an internal vertex, without loss of generality $x_1=x'_1, x_3=x'_3$. The vertex $x_2$ cannot be the same vertex as $x'_2$ as then both $F_1$ and $F_2$ would be spanned by the same $4$-cycle what contradicts the definition of quadrization. Therefore $\langle x_2,x_1,x'_2,x_3\rangle= \partial D'$. By the first rule of replacement, it spans a square that contradicts the minimality of the area of $D$.

Suppose that $k=3$. Without loss of generality we may assume that every internal vertex of $D$ is incident to at least three squares. Then $D'$ consists of $3$ squares bounded by a $6$-cycle. By the second rule of replacement there exists diagram $D''$ such that $\partial D''=\partial D'$ and $D''$ consists of two squares, a contradiction to minimality of the area of $D$.
\end{proof}

Let $Y$ be the quadrization of a \cftfs complex.
Let $L$ be the subcomplex of $Y$ with the set of vertices $$\{u_1,\ldots, u_n,v_1,\ldots, v_n, w_1,\ldots, w_n, c\},$$ the set of edges consisting of the edges of the form $(u_i,u_{i+1}),(v_i,v_{i+1}),(w_i,w_{i+1}),(w_i,v_i), (v_i,u_i)$ and $(c,w_n), (c,u_n)$ and the set of squares consisting of squares $[u_i,u_{i+1},v_i,v_{i+1}]$, $[v_i,v_{i+1},w_i,w_{i+1}]$ and $[c,u_n,v_n,w_n]$.
We call such a complex a \textit{double ladder of length} $n$ \textit{with a cap} and denote it by $\{u_1,\ldots, u_n|v_1,\ldots, v_n| w_1,\ldots, w_n| c\}.$ The square $[c,u_n,v_n,w_n]$ is called a \textit{cap} (see Fig. \ref{fig:dlc}).

\begin{figure}[H]
\begin{center}
\includegraphics[scale=1]{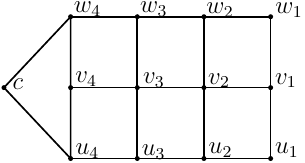}
\end{center}
\caption{Double ladder $\{u_1,\ldots, u_4|v_1,\ldots, v_4| w_1,\ldots, w_4| c\}$ of length $4$ with a cap.}\label{fig:dlc}
\end{figure}

\begin{pr}\label{dlwc}
If $\{u_1,\ldots, u_n|v_1,\ldots, v_n| w_1,\ldots, w_n| c\}$ is a double ladder with a cap, then at least one of the following conditions holds:
\begin{enumerate}[1)]
\item  $(c, v_{n-1})\in Y$;
\item for some $i \in \{3,\ldots, n\}$ $(u_i, v_{i-2})\in Y$;
\item for some $i \in \{3,\ldots, n\}$ $(w_i, v_{i-2})\in Y$;
\item $(u_2, w_{1})\in Y$; 
\item $(u_1, w_{2})\in Y$.
\end{enumerate}
\end{pr}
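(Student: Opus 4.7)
The plan is to argue by induction on $n$. The key observation is that the vertex $v_n$ is an internal vertex of $L$ and is incident to exactly three squares of $L$, namely the two rightmost ladder squares together with the cap $[c,u_n,v_n,w_n]$. The union of these three squares is bounded by the $6$-cycle
$$u_{n-1}\to u_n\to c\to w_n\to w_{n-1}\to v_{n-1}\to u_{n-1}.$$
By the second rule of replacement this $6$-cycle admits a diagonal in $Y$, and since $Y$ is bipartite the diagonal can only join one of the three antipodal pairs of the $6$-cycle: $(c,v_{n-1})$, $(u_n,w_{n-1})$ or $(u_{n-1},w_n)$.

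For the base case $n=2$ these three possibilities are exactly the pairs in conditions $(1)$, $(4)$ and $(5)$, so the proposition holds directly.

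For the inductive step assume $n\geq 3$ and the statement for double ladders with cap of length $n-1$. If the diagonal is $(c,v_{n-1})$, condition $(1)$ holds. Otherwise, say the diagonal is $(u_n,w_{n-1})$, so that the $4$-cycle $u_{n-1}\to u_n\to w_{n-1}\to v_{n-1}\to u_{n-1}$ bounds a square of $Y$. I claim that the subcomplex
$$\{u_1,\ldots,u_{n-1}\mid v_1,\ldots,v_{n-1}\mid w_1,\ldots,w_{n-1}\mid u_n\}$$
is a double ladder with cap of length $n-1$ inside $Y$: the ladder squares of $L$ indexed by $i\leq n-2$ supply the upper and lower rungs, the edges $(u_n,u_{n-1})$ and $(u_n,w_{n-1})$ play the roles of the two cap edges (the first inherited from $L$, the second being the new diagonal), and the newly produced square plays the role of the cap square, with $u_n$ in the role of the cap vertex. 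The symmetric possibility, diagonal $(u_{n-1},w_n)$, is handled the same way with $w_n$ playing the role of the new cap vertex.

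Applying the induction hypothesis to this shorter double ladder with cap produces one of its own five alternatives, each of which immediately yields one of $(1)$--$(5)$ for $L$. In the case of cap $u_n$, the shorter condition $(1)$ reads $(u_n,v_{n-2})$, which is $(2)$ at $i=n$; the shorter conditions $(2)$ and $(3)$ are special cases of $(2)$ and $(3)$ for $L$; and the shorter conditions $(4)$ and $(5)$ coincide with $(4)$ and $(5)$ for $L$. Symmetrically, in the case of cap $w_n$, the shorter condition $(1)$ reads $(w_n,v_{n-2})$, which is $(3)$ at $i=n$. The only non-routine step of the whole proof is recognizing the correct shorter double ladder with cap produced by the rule-(B) reduction; after that, matching the five conditions of the shorter ladder to those of the original is a straightforward translation, and I anticipate no deeper obstacle.
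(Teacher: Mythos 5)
Your proof is correct and follows essentially the same route as the paper: both apply the second rule of replacement to the $6$-cycle $\langle c, w_n, w_{n-1}, v_{n-1}, u_{n-1}, u_n\rangle$ bounding the cap and the two adjacent ladder squares, observe that the resulting diagonal is either $(c,v_{n-1})$ (giving condition 1) or produces a shorter double ladder with cap vertex $u_n$ or $w_n$, and then induct down to the base case yielding conditions 4) or 5). The only difference is presentational: the paper runs the induction inside a proof by contradiction, while you organize it as a direct induction with an explicit base case $n=2$ and explicit matching of the five alternatives, which is arguably cleaner but not a different argument.
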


\begin{proof}
Assume that none of these holds. By definition of a double ladder with a cap, $\langle c, w_n,w_{n-1},v_{n-1},u_{n-1},u_n\rangle$ is a $6$-cycle in $Y$, that bounds three squares. By the second rule of replacement there exists a diagonal splitting into two $4$-cycles. Therefore one of $(c, v_{n-1}), (u_n, w_{n-1}), (u_{n-1}, w_{n})$  belongs to $Y$; see Fig. \ref{fig:dlcp1} 

\begin{figure}[H]
\begin{center}
\includegraphics[scale=1]{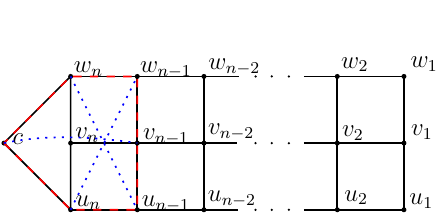}
\end{center}
\caption{The $6$-cycle $\langle c, w_n,w_{n-1},v_{n-1},u_{n-1},u_n\rangle$ bounds three squares in red. One of blue edges belongs to $Y$.}\label{fig:dlcp1}
\end{figure}

Since 1) does not hold, we obtain a shorter double ladder with the cap being $[w_n,u_{n-1},v_{n-1},w_{n-1}]$ or $[u_n,u_{n-1},v_{n-1},w_{n-1}]$.
\begin{figure}[H]
\begin{center}
\includegraphics[scale=1]{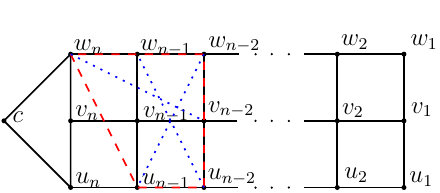}
\end{center}
\caption{One of the possible options for the second $6$-cycle that bounds three squares in red. One of the blue edges belongs to $Y$.}\label{fig:dlcp2}
\end{figure}

Since there is no $i$ such that 2) or 3) holds, by induction we obtain that $(u_2, w_{1})\in Y$ or $(u_1, w_{2})\in Y$, a contradiction.
\end{proof}

\section{Systolic complexes and Wise complex}\label{sys}

In this section we define systolic complexes and state some results concerning the Wise complex of a \css complex.

A \textit{CAT(0) simplicial complex} is a simplicial complex for which the metric obtained by making each triangle isometric to the equilateral Euclidean triangle satisfies the CAT(0) condition. As in the square complex case we use a combinatorial characterization, following from Gromov's link condition, as the definition.

\begin{df}
A simplicial complex $X$ is CAT(0) if it is simply connected and for any $0$-cell $v\in X$ the shortest embedded cycle in the link of $v$ has length at least $6$.
\end{df}

If $X$ is a disc diagram, the CAT(0) condition means that each internal vertex is incident to at least six triangles.

Let $X$ be a $2$-complex with embedded $2$-cells, such that every $1$-cell is contained in the boundary of a $2$-cell.

Let $\mathcal{U}$ be a cover of $X$.
The \textit{nerve} of the cover $\mathcal{U}$ is a simplicial complex whose vertex set is $\mathcal{U}$, and vertices $U_0,\ldots, U_n$ span an $n$-simplex if and only if $\bigcap\limits_{0\leq i\leq n}U_i\neq \emptyset$.

\begin{df}
The \textit{Wise complex} $Y$ of a complex $X$ is the nerve of the covering of $X$ by closed $2$-cells.
\end{df}

By definition Wise complex is a simplicial complex. 
Similarly as in the case of quadrization, we will sometimes consider some cells in both the complex $X$ and its Wise complex $Y$ at the same time. In such cases by $\widehat{x}$ we will denote the $2$-cell corresponding to the vertex $x$ from $Y$. 

It is known that the Wise complex of a simply connected \css complex is systolic \cite[Theorem 10.6]{wise}, see also \cite{OP18}.

\begin{df}
Let $X$ be a simplicial complex. 
$X$ is $k$-\textit{large} if $X$ is flag and every cycle in $X$ of length less than $k$ has a diagonal, i.e. in $X$ there is an edge connecting nonconsecutive vertices of the cycle.
\end{df}

\begin{lm}\cite[Lemma 1.3]{TJJS}\label{13tjjs}
Suppose that $X$ is $k$-large and $\mathbb{S}^1_m$ denotes the triangulation of $\mathbb{S}^1$ with $m$ $1$-cells. If $m<k$ then any simplicial map $f:\mathbb{S}^1_m\rightarrow X$ extends to a simplicial map from disc $\mathbb{D}^2$, triangulated so that the triangulation on the boundary is $\mathbb{S}^1_m$ and so that there are no interior vertices in $\mathbb{D}^2$.
\end{lm}

\begin{df}
A simplicial complex $X$ is \textit{systolic} if $X$ is simply connected, and links of all vertices of $X$ are $6$-large.
\end{df}

Proposition below follows from minimal area disc diagrams of systolic complexes being systolic and a remark by Januszkiewicz and \'Swi\k{a}tkowski \cite[p.51]{TJJS}.

\begin{pr}
Let $\alpha$ be a cycle in a systolic complex $X$. If $D$ is a minimal area disc diagram for $\alpha$ in $X$, then $D$ is a CAT(0) simplicial complex.
\end{pr}

Let $L$ be the flag subcomplex of $Y$ spanned by the set of vertices 
$$\{u_1,\ldots, u_n,v_1,\ldots, v_n, w_1,\ldots, w_n\}$$
and the set of edges: 
$$
\{(u_i,u_{i+1}),(v_i,v_{i+1}),(w_i,w_{i+1}),(w_i,v_{i+1}),(v_{i+1},u_i): 1\leq i\leq n-1\}\cup$$
$$ \{(w_i,v_i), (v_i,u_i): 1\leq i\leq n\}\cup\{(u_n,w_n)\}.
$$  
We call such a complex a \textit{double ladder of length} $n$ \textit{with a cap} and denote it by $$\{u_1,\ldots, u_n|v_1,\ldots, v_n| w_1,\ldots, w_n\}.$$ The triangle $[u_n,v_n,w_n]$ is called a \textit{cap}, see Fig. \ref{fig:dlcsys}.

\begin{figure}[H]
\begin{center}
\includegraphics[scale=1]{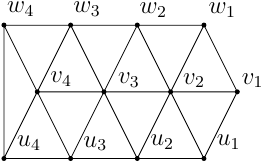}
\end{center}
\caption{Double ladder $\{u_1,\ldots, u_4|v_1,\ldots, v_4| w_1,\ldots, w_4\}$ of length $4$ with a cap.}\label{fig:dlcsys}
\end{figure}

\begin{pr}\label{dlwcsys}
If $\{u_1,\ldots, u_n|v_1,\ldots, v_n| w_1,\ldots, w_n\}$ is a double ladder with a cap, then at least one of the following conditions holds:
\begin{enumerate}[1)]
\item for some $i \in \{3,\ldots n\}$ $(u_i, v_{i-1})\in Y$;
\item for some $i \in \{3,\ldots n\}$ $(w_i, v_{i-1})\in Y$;\item $(u_1, w_{1})\in Y$.
\end{enumerate}
\end{pr}

\begin{proof}
Assume that none of these holds. By definition of a double ladder with a cap, $\langle w_n,w_{n-1},v_{n-1},u_{n-1},u_n\rangle$ is a $5$-cycle in $Y$. By systolicity, one of $(w_n, v_{n-1}), (u_n, v_{n-1}), (u_{n-1}, w_{n-1})$ belongs to $Y$. 

\begin{figure}[H]
\begin{center}
\includegraphics[scale=1]{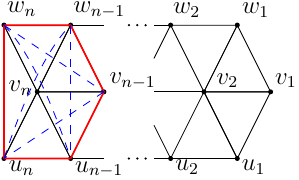}
\end{center}
\caption{The $5$-cycle $\langle w_n,w_{n-1},v_{n-1},u_{n-1},u_n\rangle$ in red. One noncrossing pair of blue edges belongs to $Y$.}\label{fig:dlcp1sys}
\end{figure}

Since neither 1) nor 2) holds, we obtain a shorter double ladder with the cap $[u_{n-1},v_{n-1},w_{n-1}]$.
Since there is no $i$ such that 1) or 2) holds, by induction we obtain that $(u_1, w_{1})\in Y$. Contradiction.
\end{proof}

\section{Curvature}\label{cv}
Let $D$ be a disc diagram that considered as a CW-complex is a square complex. In this section we will call such diagrams \textit{square disc diagrams}.

The \textit{square curvature} of a vertex $v\in D$ is defined as
$$
\kappa^{\square}_D(v)=4 - 2\delta_D(v) + \rho^{\square}_D(v),
$$
where $\delta_D(v)$, as previously, denotes the valence of vertex $v$ and $\rho^{\square}_D(v)$ denotes the number of squares incident to $v$, cf. Fig.\ref{fig:class}.

\begin{figure}[H]
\begin{center}
\includegraphics[scale=1.5]{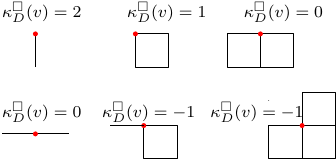}
\end{center}
\caption{All the possible neighborhoods of a boundary vertex $v$ (red) of a square disc diagram $D$ such that $-1\leq\kappa^{\square}_D(v)\leq 2$.}\label{fig:class}
\end{figure}

Analogously we define the curvature for simplicial complexes. Let $D$ be a disc diagram that considered as a CW-complex is a simplicial complex. In this section, we will call such diagrams \textit{simplicial disc diagrams}.

The \textit{simplicial curvature} of a vertex $v\in D$ is defined as follows
$$
\kappa^{\triangle}_D(v)=6 - 3\delta_D(v) + 2\rho^{\triangle}_D(v),
$$
where $\rho^{\triangle}(v)$ denotes the number of triangles incident to $v$, cf. Fig.\ref{fig:class2}.

\begin{figure}[H]
\begin{center}
\includegraphics[scale=1.5]{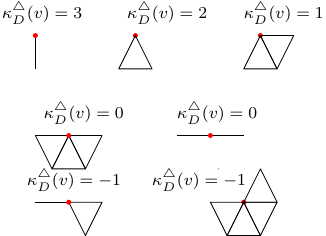}
\end{center}
\caption{All the possible neighborhoods of a boundary vertex $v$ (red) of a simplicial disc diagram $D$ such that $-1\leq\kappa^{\triangle}_D(v)\leq 3$.}\label{fig:class2}
\end{figure}

We now state a version of the combinatorial Gauss-Bonnet theorem for CAT(0) square and simplicial disc diagrams. Generalization of the Gauss-Bonnet theorem to arbitrary $2$-complexes was stated and proven by McCammond and Wise in \cite[Theorem 4.6]{MW}. The version for CAT(0) square disc diagrams was already stated by Hoda in \cite[Proposition 1.8]{hoda2019quadric}.

\begin{pr}[Gauss-Bonnet Theorem for a CAT(0) Disc Diagrams]\label{gb}
Let $D$ be a CAT(0) square disc diagram and let $E$ be a CAT(0) simplicial disc diagram. Then: 
$$
\sum\limits_{v \in D}\kappa^{\square}_D(v)=4 
\text{ and }
\sum\limits_{v \in E}\kappa^{\triangle}_E(v)=6, 
$$
moreover:
$$
\sum\limits_{v \in \partial D}\kappa^{\square}_D(v)\geq 4
\text{ and }
\sum\limits_{v \in \partial E}\kappa^{\triangle}_E(v)\geq 6. 
$$
\end{pr}
\begin{proof}
We only show the proof in the case of simplicial disc diagrams, case of square disc diagrams is analogous, and was already proved by Hoda.

For a disc diagram the Euler characteristic $\chi(E)$ is equal to $1$. It can be computed by subtracting the number of edges from the number of vertices and triangles. That is, each edge contributes $-1$ to the Euler characteristic and each vertex or $2$-cell contributes $+1$.
Distributing $\frac{-1}{2}$ to each of the endpoints of the edge, and $+\frac{1}{3}$ to each of vertices at the boundary of each triangle we obtain:

$$
\chi(E)=\sum\limits_{v \in E}\frac{\kappa^{\triangle}_E(v)}{6}.
$$

In the case of internal vertices, we have $\delta_E(v)=\rho^{\triangle}_E(v)$ and therefore by the CAT(0) property $\kappa^{\triangle}_E(v)$ is nonpositive for the internal vertices.
\end{proof}

The square and the simplicial curvatures have analogous properties, from now on we will often not differentiate between them and just denote it by $\kappa_D(v)$ with the type of the curvature known from context.

\begin{pr}\label{geo}
Let $D$ be either square or simplicial disc diagram and let $\gamma\subset\partial D$ be a geodesic in $D$. Then none of the internal vertices of $\gamma$ has the curvature greater than $1$. Moreover, if $u,v$ are internal vertices of $\gamma$ with $\kappa_D(u)=\kappa_D(v)=1$, then there is a vertex $w\in \gamma$ between $v$ and $u$ with $\kappa_D(u)\leq -1$.
\end{pr}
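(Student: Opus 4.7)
The curvature formula $\kappa_D(v)=2\pi-\delta_D(v)\pi+\rho_D(v)\pi/2$ combined with the relation $\rho(v)=\delta(v)-1$, valid for a non-singular boundary vertex (whose link is an arc), yields $\kappa_D(v)=\pi$ only when $\delta_D(v)=1$; but then $v$ is incident to a single edge and cannot be an internal vertex of any path. For singular vertices Proposition \ref{singular} gives $\kappa_D(v)\leq-\pi/2$. This proves the first assertion.

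\textbf{Part 2, reduction.} I argue by contradiction. Suppose $u,v$ are internal vertices of $\gamma$ with $\kappa(u)=\kappa(v)=\pi/2$ and no vertex strictly between them has curvature $\leq-\pi/2$. By Part 1 and the same case analysis of boundary curvatures, every intermediate vertex then has curvature in $\{0,\pi/2\}$. Replacing $v$ by a closer $\pi/2$-vertex if one exists strictly between $u$ and $v$, I may assume all intermediate vertices are flat. Label the relevant portion of $\gamma$ as $v_{i-1},\,v_i=u,\,v_{i+1},\ldots,v_{j-1},\,v_j=v,\,v_{j+1}$, so that $\kappa(v_k)=0$ for $i<k<j$.

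\textbf{Part 2, strip construction.} I will build a strip of squares on the interior side of this subpath. The convex vertex $u$ forces a unique incident square $F_u=[v_{i-1},v_i,v_{i+1},w_i]$. At each flat intermediate $v_{k+1}$ the interior edge at $v_{k+1}$ is unique, so the other incident square is forced to have the form $S_{k+1}=[w_k,v_{k+1},v_{k+2},w_{k+1}]$. Setting $w_{i-1}:=v_{i-1}$, I obtain inductively a chain of squares $S_k=[w_{k-1},v_k,v_{k+1},w_k]$ for $i\leq k\leq j-1$. At $v$ the unique incident square $F_v=[v_{j-1},v_j,v_{j+1},w_j]$ must coincide with $S_{j-1}$, since both are the unique interior-side square adjacent to the boundary edge $v_{j-1}v_j$; comparing their cyclic vertex descriptions forces $w_{j-1}=v_{j+1}$.

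\textbf{Part 2, contradiction.} The edges $w_{k-1}w_k$ appearing as sides of the squares $S_k$ concatenate into a walk of length $j-i$ from $v_{i-1}$ to $v_{j+1}$, whereas the subpath of $\gamma$ connecting these two vertices has length $j-i+2$. Since every subpath of a geodesic is itself a geodesic, this is a contradiction, establishing the existence of some $w$ between $u$ and $v$ with $\kappa(w)\leq-\pi/2$. I expect the main subtlety to be the bookkeeping that justifies the inductive construction of the strip---in particular the identification $F_v=S_{j-1}$ and the resulting matching $w_{j-1}=v_{j+1}$---both of which ultimately rest on the uniqueness of the square on the interior side of each boundary edge of $D$.
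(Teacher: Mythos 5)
Your proof is correct and follows essentially the same route as the paper: the first part is the same spur argument ($\kappa=\pi$ forces $\delta=1$), and your strip construction is just an explicit justification of the paper's one-line claim that $d(v_{i-1},v_{j+1})$ would equal $d(v_i,v_j)$, contradicting geodesicity. The paper asserts this distance identity without detail, so your careful bookkeeping of the forced squares $S_k$ and the identification $w_{j-1}=v_{j+1}$ is a faithful (and more complete) rendering of the intended argument.
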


\begin{proof}
Let $\gamma:=(v_0,v_1,\ldots,v_n)$. If $\kappa^{\square}_D(v_i)>1$, then, by Figure \ref{fig:class}, we have $v_{i-1}=v_{i+1}$. If $\kappa^{\triangle}_D(v_i)>1$, then, by Figure \ref{fig:class2}, either $v_{i-1}=v_{i+1}$ or there exists an edge between $v_{i-1}$ and $v_{i+1}$. Either case contradicts the fact that $\gamma$ is a geodesic.

Let $u=v_i$ and  $v=v_j$. Assume, without loss of generality, that $\kappa_D(v_s)=0$ for $i<s<j$. 
If $D$ is a square disc diagram, the distance between $v_{i-1}$ and $v_{j+1}$ is equal to at most the distance between $v_i$ and $v_j$.  
If $D$ is a simplicial disc diagram, the distance between
$v_{i-1}$ and $v_{j+1}$ is equal to at most the distance between $v_i$ and $v_j$ increased by one.
Either case contradicts the fact that $\gamma$ is a geodesic.
\end{proof}

In fact, these properties of geodesics stated above characterize geodesics in systolic and quadric complexes. 

\begin{lm}\label{geo2}
Let $\gamma:=\{g_0,g_1,\ldots,g_n\}$ be a path in an either systolic or quadric complex $X$ and let $\gamma':=\{g_0'=g_0,g_1',\ldots,g'_m=g_n\}$ be a geodesic between the endpoints of $\gamma$ such that a minimal area disc diagram $D$ for $\gamma\cup\gamma'$ in $X$ has the smallest area.
If for all $0<i\leq j<n$ we have
$$\sum\limits_{i\leq k\leq j}\kappa_D(g_k)\leq 1,$$
then $\gamma$ is a geodesic.
\end{lm}

\begin{proof}
By Proposition \ref{geo} all vertices along $\gamma'$ have nonpositive curvature in $D$. Indeed, if for some $0<i<m$ we have $\kappa_D(g'_i)=1$, then there exists a vertex $g_i''\in D$ incident to $g'_{i-1}$ and $g'_{i+1}$ (see Figures \ref{fig:class} and \ref{fig:class2}). 
Therefore, there exists a smaller area disc diagram $D'$ for $\gamma\cup\gamma''$, where $\gamma'':=\{g'_0,\ldots,g'_{i-1},g_i'',g'_{i+1}\ldots, g'_n\}$. 
Therefore, the sum of the curvatures along $\gamma'$ is bounded by $0$. 

On the other hand, the curvature along $\gamma$ is bounded by $1$. 
Then by nonpositiveness of the curvature of the internal vertices and Proposition \ref{gb}, it follows that the sum of the curvature of vertices $g_0,g_n$ is at least $5$ in the case of the simplicial disc diagrams and at least $3$ in the case of square disc diagrams. In either case at least one of $g_0, g_n$ has to be a spur, i.e. a vertex of valence $1$.

We will show that $D$ is a tree.
Assume that $D$ is not a tree and without loss of generality assume that $g_0$ is a spur. We take the smallest $i$ such that $g_i$ is on the boundary of a $2$-cell.
Then the curvature of $g_i=g_i'$ has to be negative (see Figures \ref{fig:class}, \ref{fig:class2}), and the curvature of vertices $g_j=g_j'$ for $0<j<i$ is $0$. Therefore, the sum of the curvatures along $\gamma'$ is negative. 
The sum of the curvatures of vertices $g_j$ for $i<j<n$ is bounded by $1$. Therefore the sum of the curvature along $\gamma$ is $0$. 
Thus the sum of the curvatures of vertices $g_0,g_n$ is at least $7$ in the case of the simplicial disc diagrams, or at least $5$ in the case of square disc diagrams. This is possible only if one of these vertices is not connected to the rest of the diagram, a contradiction.
 
If $D$ is a tree and $\gamma$ is not a geodesic, then there exists vertex $g_j$ for $0<j<n$ that is a spur, but spurs have curvature greater than $1$, a contradiction.
\end{proof}

\section{Combining geodesics}
\label{negcuv}

In this section, in Lemma~\ref{lad} and Lemma~\ref{ladc6}, we show that some particular paths obtained via gluing two geodesics at their endpoints are geodesics themselves. This is used in Section~\ref{inford} to construct automorphisms of infinite order.

\subsection{\cftfs case.}
\label{negcuvcftf}

Here we assume that $X$ is a simply connected \cftfs small cancellation complex such that every $1$-cell of $X$ is contained in the boundary of a $2$-cell. Let $Y$ be the quadrization of $X$. Let $G$ be a finitely generated group acting on $X$ by automorphisms and assume that this action induces a free action on the $1$-skeleton $X^{1}$ of $X$. It is clear that $G$ acts on $Y$ by automorphisms  and this action induces a free action on the sets of edges and vertices from $X_0$. Moreover, since by Proposition \ref{ctc} any non-empty intersection of two $2$-cells from $X$ is connected, the action of $G$ on $Y$ induces a free action on the set of squares as well.
From now on, if $D$ is a diagram in $Y$ and $v$ is a vertex in $D$ then $v$ is mapped to a vertex in $Y$ denoted by $v^Y$. In some cases, we denote by $u^D$ a vertex in $D$ that is mapped to a vertex $u\in Y$. Note that there is some ambiguity, as more than one vertex can be mapped to $u$, therefore we only use this notation when this ambiguity does not matter.  

The aim of this section is to prove a technical Lemma \ref{lad}, which is necessary for the proof of the \cftfs part of Lemma \ref{inf}, the main lemma of Section \ref{inford}. But first, we need to prove the following.

\begin{lm}\label{laddy}
Let $D$ be a minimal area disc diagram in $Y$ and $x\in D$ be a vertex such that $x^Y$ is fixed by some $h\in G$.
Let $(u_{n}, u_{n-1},\ldots, u_1, u_0=x=v_0, v_1,\ldots, v_{n},v_{n+1}, \overline{u}_{n}, \overline{u}_{n-1},\ldots, \overline{u}_1, \overline{u}_0=\overline{x}=\overline{v}_0, \overline{v}_1,\ldots, \overline{v}_{n},\overline{v}_{n+1})$ be a tuple of vertices such that (see Figure \ref{fig:llad}):
\begin{enumerate}[1)]
\item $hu^Y_i=v^Y_i$ for $0\leq i\leq n$;
\item $h\overline{u}^Y_i\neq\overline{v}^Y_i$ for $0\leq i\leq n$;
\item $\{u_n, u_{n-1},\ldots, u_1, x, v_1,\ldots, v_{n},v_{n+1}| \overline{u}_n, \overline{u}_{n-1},\ldots, \overline{u}_1, \overline{x}, \overline{v}_1,\ldots, \overline{v}_{n},\overline{v}_{n+1}\}$ is a ladder in $D$;
\item  $\{h\overline{x}^Y,h\overline{u}_1^Y\ldots, h\overline{u}_{n}^Y|x^Y,v_1^Y,\ldots, v_{n}^Y| \overline{x}^Y,\overline{v}_1^Y,\ldots, \overline{v}_{n}^Y|\overline{v}_{n+1}^Y\}$ is a double ladder with a cap in $Y$.
\end{enumerate}
\begin{figure}[H]
\begin{center}
\includegraphics[scale=0.9]{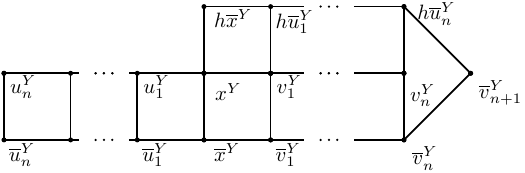}
\end{center}
\caption{}\label{fig:llad}
\end{figure}

Then at least one of the following holds: 
\begin{enumerate}[(i)]
\item $(\overline{v}_{s}^Y, v_{s-2}^Y)\in Y$ for some $2\leq s \leq n+1$;
\item $(\overline{u}_{s}^Y, u_{s-2}^Y)\in Y$ for some $2\leq s \leq n$ ; 
\item $d_{Y_x}(h\overline{x}^Y,\overline{x}^Y)< 2$.
\end{enumerate} 
\end{lm}

\begin{proof}
By Proposition \ref{dlwc} at least one of the following 
holds:
\begin{enumerate}[(a)]
\item $(\overline{v}_{n+1}^Y, v_{n-1}^Y)\in Y$;
\item $(h\overline{u}_{s}^Y, v_{s-2}^Y)\in Y$ for some $2 \leq s\leq n$;
\item $(\overline{v}_{s}^Y, v_{s-2}^Y)\in Y$ for some $2 \leq s\leq n$;
\item $(h\overline{u}_{1}^Y, \overline{x}^Y)\in Y$; 
\item $(\overline{v}_{1}^Y, h\overline{x}^Y)\in Y$.
\end{enumerate}
The case (i) is satisfied if (a) or (c) holds. 

If (b) holds, then case (ii) is satisfied. Indeed, if for some $2\leq s\leq n$ we have $(h\overline{u}_{s}^Y, v_{s-2}^Y=hu_{s-2}^Y)\in Y$, then $(\overline{u}_{s}^Y, u_{s-2}^Y)\in Y$. 

The case (iii) is satisfied if either (d) or (e) holds. Indeed, if one of these cases holds, then $R_1:=[\overline{x}^Y, x^Y, h\overline{x}^Y,h\overline{u}_{1}^Y]$ or $R_2:=[\overline{x}^Y, x^Y, h\overline{x}^Y,\overline{v}_{1}^Y]$ is a square in $Y$, therefore $d_{Y_x}(h\overline{x}^Y,\overline{x}^Y)<2$.
\end{proof}

\begin{lm}\label{lad}
Let $x\in X_2$ and $g \in G$ be such that $x\in \mathrm{Fix}_Y(g)$. Assume that for every square $P\in Y$ containing $x$ in its boundary, we have $P\cap gP= \{x\}$.
Let $\gamma_1 := (x=x_0, x_1,\ldots, x_n)$ and $\gamma_2 :=(x=y_0, y_1,\ldots, y_m)$ be a geodesics in $Y$, such that $n\leq m$ and for all $i\leq n$ we have $gx_i=y_i$.
Then $\alpha:=\gamma_1\cup \gamma_2$ is a geodesic.
\end{lm}
\begin{proof}
Assume that $\alpha$ is not a geodesic. Then there exists a geodesic $\beta$ between $x_n$ and $y_m$.
From the set of geodesics between $x_n$ and $y_m$ choose $\beta$ such that the minimal area disc diagram $D$ for the path $\alpha\cup\beta$ has the smallest area. 

We want to show that for any $u,v\in \alpha$ such that $x_n<u^Y<v^Y<y_m$, if we have $\kappa_D(u)=\kappa_D(v)=1$, then there is a vertex $w\in \alpha$, such that
$u^Y<w^Y<v^Y$ and $\kappa_D(w)\leq -1$. 
It is clear that in such case the sum of the curvature along any subpath of $\alpha$ is bounded by $1$, and the assumptions of Lemma \ref{geo2} are satisfied.

First, observe that between $u$ and $v$ there is no vertex with curvature $2$. 
Indeed, a boundary vertex has curvature $2$ iff it is an end of a spur (see Figure \ref{fig:class}). By Proposition \ref{geo} such a vertex cannot be an internal vertex of a geodesic, so it has to be mapped to $x$. But if $x^D$ is a spur then $g$ fixes a vertex from $X_0$, a contradiction. 
Therefore each vertex between $u$ and $v$ is incident to at least one square (see Figure \ref{fig:class}).

Assume that there are no vertices of curvature at most $-1$ between $u$ and $v$.
Without loss of generality, we can assume that all vertices between $u$ and $v$ have curvature $0$ and by Figure \ref{fig:class} each of them is incident to two squares.
Observe, that $u^Y\neq x \neq v^Y$.
Indeed, if $\kappa_D(x^D)=1$ then $x^D$ is incident to exactly one square $P\in D$. Therefore there is $P^Y\in Y$ corresponding to $P$ such that $P^Y\cap gP^Y\neq \{x\}$, a contradiction. 

By Proposition \ref{geo}, $u$ and $v$ cannot be internal vertices of the same geodesic. Therefore we can assume that $u^Y\in\gamma_1$, $v^Y\in\gamma_2$.

Now we will find a tuple satisfying the assumptions 1)-4) of Lemma \ref{laddy}.
We begin by taking the sequence $u_N=u, u_{N-1},\ldots, u_1, u_0=x^D=v_0, v_1,\ldots, v_{M-1},v_M=v$ of consecutive vertices in $\partial D$ between $u$ and $v$. We have $gu^Y_p=v^Y_p$ for $p\leq N$.

We assumed that $u_N,v_M$ have curvature $1$ and all of vertices between them have curvature $0$. Because of that the neighborhood of $u_N,\ldots, u_1, x^Ds, v_1,\ldots,v_M$ has the following form.
The vertex $x^D$ is adjacent to three vertices, two of them being $u_{1}$, $v_{1}$, we denote the remaining adjacent vertex by $\overline{x}$.
The vertex $u_N$ (resp. $v_M$) is adjacent to two vertices, one of them $u_{N-1}$ (resp. $v_{M-1}$), we denote the remaining adjacent vertex by $\overline{u_N}$ (resp. $\overline{v_M}$). 
For $0<s<N$ (resp. $0<p<M$) the vertex $u_s$ (resp. $v_p$) is adjacent to three vertices, two of them being $u_{s-1}$, $u_{s+1}$ (resp. $v_{p-1}$, $v_{p+1}$).  We denote the remaining adjacent vertex by $\overline{u}_s$ (resp. $\overline{v}_p$), see Figure \ref{fig:lla1}.

\begin{figure}[H]
\begin{center}
\includegraphics[scale=0.9]{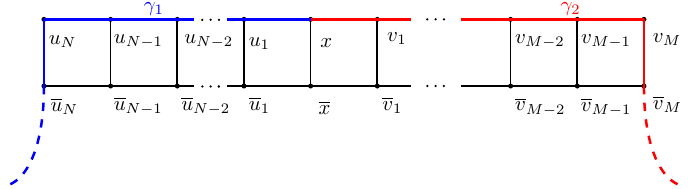}
\end{center}
\caption{Disc diagram $D$ in the neighborhood of $u_N,\ldots, u_1, x, v_1,\ldots,v_M$.}\label{fig:lla1}
\end{figure}

Let the sequence $\overline{u}_N^Y, \overline{u}_{N-1}^Y,\ldots, \overline{u}_1^Y,\overline{u}_0^Y=\overline{x}^Y=\overline{v}_0^Y, \overline{v}_1^Y,\ldots, \overline{v}^Y_{M-1},\overline{v}^Y_M$, be the corresponding sequence of vertices in $Y$. We have $\overline{u_N}^Y\in \gamma_1$ and $\overline{v_M}^Y\in \gamma_2$.

For any $0<p\leq M$, $0<s\leq N$ we define squares $V_p=[v^Y_{p-1},v^Y_p,\overline{v}^Y_p,\overline{v}^Y_{p-1}]$ and $U_s=[u_{s-1}^Y,u_s^Y,\overline{u}_s^Y,\overline{u}_{s-1}^Y]$ (we remind that $x=u_0^Y=v_0^Y$ and $\overline{x}^Y=\overline{u}_0^Y=\overline{v}_0^Y$). We consider the subcomplex $S\subset Y$ consisting of $gU_1, \ldots gU_N,V_1,\ldots V_M$.
Let us remind that for $s\leq N$ we have $g u^Y_s=v^Y_s$. We note here that it is possible that $g\overline{u}^Y_s=\overline{v}^Y_s$ as shown in Figure \ref{fig:lla2}.
\begin{figure}[H]
\begin{center}
\includegraphics[scale=0.9]{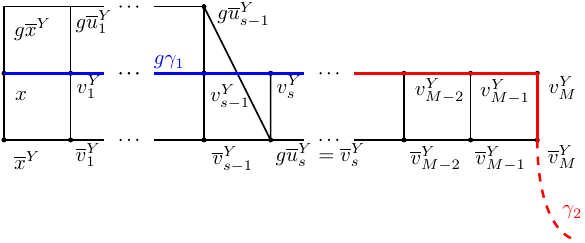}
\end{center}
\caption{Example of the subcomplex $S$. In this case $g\overline{u}^Y_s=\overline{v}^Y_s$ and $g\overline{u}^Y_{s-1}\neq\overline{v}^Y_{s-1}$.}\label{fig:lla2}
\end{figure}
It is clear that one of the following cases holds.

\begin{enumerate}[1)]

\item $g\overline{x}^Y=\overline{x}^Y$.

\item $g\overline{x}^Y\neq\overline{x}^Y$, and there is some $p\leq N$, such that $g\overline{u}^Y_p= \overline{v}^Y_p$ and $g\overline{u}^Y_s\neq \overline{v}^Y_s$ for $s < p$. If $p=N$ then we have $N=M$ as $\overline{v}^Y_N=g\overline{u}^Y_N \in \alpha$.

\item $g\overline{x}^Y\neq\overline{x}^Y$, $gu^Y\neq v^Y$ and for $s \leq N$ we have $g\overline{u}^Y_s\neq \overline{v}^Y_s$. In this case we have $g\overline{u}^Y_N=v^Y_{N+1}$.
\end{enumerate}

We will show that each of these cases results in a contradiction. 

In case 1) $g$ is not an identity and fixes $\overline{x}^Y$. But $\widehat{\overline{x}^Y}$ is a $0$-cell in $X$, therefore we have a contradiction to the freeness of the action of $G$ on $X^1$.

In case 2) we have $g\overline{u}^Y_p= \overline{v}^Y_p$ and $gu^Y_{p-1}\neq v^Y_{p-1}$. It follows that $\partial (gU_p\cup V_p) = \langle\overline{v}^Y_{p},\overline{v}^Y_{p-1},v^Y_{p-1},g\overline{u}^Y_{p-1}\rangle$ is a $4$-cycle in $Y$. Since $Y$ is the quadrization of the complex $X$, each $4$-cycle spans a square and $[\overline{v}^Y_{p},\overline{v}^Y_{p-1},v^Y_{p-1},g\overline{u}^Y_{p-1}]\in Y$.
Figure \ref{fig:ll1} show the complex $S$.
\begin{figure}[H]
\begin{center}
\includegraphics[scale=0.9]{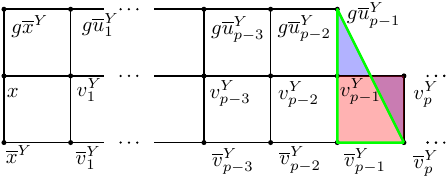}
\end{center}
\caption{The boundary of the union of $gU_p$ (blue+purple) and $V_p$ (red+purple) is a $4$-cycle (green).}\label{fig:ll1}
\end{figure}
Thus the cycle $\langle \overline{v}^Y_{p},\overline{v}^Y_{p-1},\ldots ,\overline{v}^Y_{1},\overline{x}^Y, x,g\overline{x}^Y, g\overline{u}^Y_{1},\ldots ,g\overline{u}^Y_{p-1}\rangle $ bounds a double ladder with a cap, where the square $[\overline{v}^Y_p,\overline{v}^Y_{p-1},v^Y_{p-1},g\overline{u}^Y_{p-1}]$ is a cap. 
Clearly, the tuple 
$$
(u_{p-1}, u_{p-2},\ldots, u_1, x, v_1,\ldots, v_{p-1},v_{p}, \overline{u}_{p-1}, \overline{u}_{p-2},\ldots, \overline{u}_1, \overline{x}, \overline{v}_1,\ldots, \overline{v}_{p-1},\overline{v}_p)
$$
satisfies the conditions of Lemma \ref{laddy}.
Therefore one of the following holds:
\begin{enumerate}[(i)]
\item for some $2\leq s \leq p$, $(\overline{v}_{s}^Y, v_{s-2}^Y)\in Y$;
\item for some $2\leq s \leq p-1$, $(\overline{u}_{s}^Y, u_{s-2}^Y)\in Y$; 
\item there exists a square $P$ in $Y$ such that $x\in P$ and $P\cap gP\supsetneq \{x\}$.
\end{enumerate}

In case (i) both $\overline{v}_M$ and $v_{s-2}$ are vertices belonging to the geodesic $\gamma_2$. Therefore $(v_{s-2},\ldots, v_M, \overline{v}_M)$ is a geodesic, but $(v_{s-2},\overline{v}_{s},\ldots, \overline{v}_M)$ is a shorter path between the same pair of vertices, a contradiction.

In case (ii) both $\overline{u}_N$ and $u_{s-2}$ are vertices belonging to the geodesic $\gamma_1$. Therefore $(u_{s-2},\ldots, u_N, \overline{u}_N)$ is a geodesic, but $(u_{s-2},\overline{u}_{s},\ldots, \overline{u}_N)$ is a shorter path between the same pair of vertices, a contradiction.

If (iii) holds, then we have a square $P\ni x$ such that $P\cap gP\neq \{x\}$. A contradiction.

In case 3) we have a double ladder with a cap in $Y$ bounded by the cycle 
$$\langle\overline{v}^Y_{N+1},\overline{v}^Y_{N},\ldots, \overline{v}^Y_{1},\overline{x}^Y, x,g\overline{x}^Y, g\overline{u}^Y_{1},\ldots ,g\overline{u}^Y_N\rangle$$ (see Figure \ref{fig:ll5}). 
\begin{figure}[H]
\begin{center}
\includegraphics[scale=0.9]{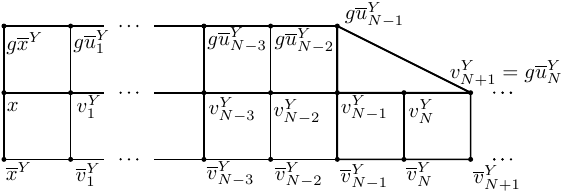}
\end{center}
\caption{Double ladder with a cap in $Y$. The square $[v^Y_{N+1},\overline{v}^Y_{N+1},\overline{v}^Y_{N},v^Y_{N}]$ is a cap.}\label{fig:ll5}
\end{figure}

Clearly, the set 
$$
\{u_N, u_{N-1},\ldots, u_1, x, v_1,\ldots, v_{N},v_{N+1}, \overline{u}_N, \overline{u}_{N-1},\ldots, \overline{u}_1, \overline{x}, \overline{v}_1,\ldots, \overline{v}_{N},\overline{v}_{N+1}\}
$$
satisfies the conditions of Lemma \ref{laddy}.
Again each case following from this lemma results in a contradiction.

Therefore between any two vertices $x_n<u<v<y_m$ such that $\kappa_D(u)=\kappa_D(v)=1$, there is a vertex $w$ such that $u^Y<w^Y<v^Y$ and the curvature of $w$ is at most $-1$. 
As already mentioned, in such a case $\alpha$ satisfies the conditions of Lemma \ref{geo2}, thus $\alpha$ is a geodesic.
\end{proof}

\subsection{\css case.}\label{negcuvcs}
This subsection is quite similar to the previous one, with lemmas and proofs in this section being analogous to the ones from the previous section. We begin this section with analogous assumptions.

We assume that $X$ is a simply connected \css small cancellation complex such that every $1$-cell of $X$ is contained in the boundary of a $2$-cell. Let $Y$ be the Wise complex of $X$. Let $G$ be a finitely generated group acting on $X$ by automorphisms and assume that this action induces a free action on the $1$-skeleton $X^{1}$ of $X$. It is clear that $G$ acts on $Y$ by automorphisms. From now on, if $D$ is a diagram in $Y$ and $v$ is a vertex in $D$ then $v$ is mapped to a vertex in $Y$ denoted by $v^Y$. 

The aim of this section is to prove a technical Lemma \ref{ladc6}, which is a \css analogue of Lemma \ref{lad} and is necessary for the proof of the \css part of Lemma \ref{inf}, the main lemma of the Section \ref{inford}. We begin with a \css analogue of Lemma \ref{laddy}. 

\begin{lm}\label{laddyc6}
Let $D$ be a minimal area disc diagram in $Y$ and $x\in D$ be a vertex such that $x^Y$ is fixed by some $h\in G$.

Let $(u_{n}, u_{n-1},\ldots, u_1, u_0=x=v_0, v_1,\ldots, v_{n}, \overline{u}_{n}, \overline{u}_{n-1},\ldots, \overline{u}_1,\overline{u}_0, \overline{v}_0, \overline{v}_1,\ldots, \overline{v}_{n-1},\overline{v}_{n})$ be a tuple of vertices such that (see Figure \ref{fig:lladcs}):
\begin{enumerate}[1)]
\item $hu^Y_i=v^Y_i$ for $0\leq i\leq n$;
\item $h\overline{u}^Y_i\neq\overline{v}^Y_i$ for $0\leq i\leq n$;
\item $\{u_n, u_{n-1},\ldots, u_1, x, v_1,\ldots, v_{n}| \overline{u}_{n+1}, \overline{u}_{n},\ldots, \overline{u}_1,\overline{u}_0, \overline{v}_0, \overline{v}_1,\ldots, \overline{v}_{n},\overline{v}_{n+1}\}$ is a ladder in $D$;
\item  $\{h\overline{u}_0^Y,h\overline{u}_1^Y\ldots, h\overline{u}_{n}^Y|x^Y,v_1^Y\ldots, v_{n}^Y|\overline{v}_0^Y,\ldots, \overline{v}_{n}^Y\}$ is a double ladder with a cap in $Y$.
\end{enumerate}

Then at least one of the following holds: 
\begin{enumerate}[(i)]
\item for some $1\leq s \leq n$, $(\overline{v}_{s}^Y, v_{s-1}^Y)\in Y$;
\item for some $1\leq s \leq n$, $(\overline{u}_{s}^Y, u_{s-1}^Y)\in Y$; 
\item $d_{Y_x}(h\overline{u}^Y_0,\overline{u}^Y_0)< 3$.
\end{enumerate} 
\end{lm}
\begin{figure}[H]
\begin{center}
\includegraphics[scale=0.9]{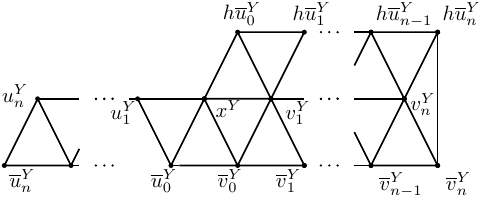}
\end{center}
\caption{}\label{fig:lladcs}
\end{figure}
\begin{proof}
By Proposition \ref{dlwcsys} at least one of the following 
holds:
\begin{enumerate}[(a)]
\item for some $1 \leq s\leq n$ $(\overline{v}_{s}^Y, v_{s-1}^Y)\in Y$;
\item for some $1 \leq s\leq n$ $(h\overline{u}_{s}^Y, v_{s-1}^Y)\in Y$;
\item $(h\overline{u}_{0}^Y, \overline{v}_0^Y)\in Y$; 
\end{enumerate}
The case (i) is satisfied if (a) holds.

The case (ii) is satisfied if (b) holds. Indeed, if for some $1\leq s\leq n+1$ we have $(h\overline{u}_{s}^Y, v_{s-1}^Y=hu_{s-1}^Y)\in Y$, then $(\overline{u}_{s}^Y, u_{s-1}^Y)\in Y$. 

The case (iii) is satisfied if (c) holds. Indeed, $d_{Y_x}(h\overline{u}_{0}^Y,\overline{u}_{0}^Y)\leq 2$, follows from $\overline{u}_{0}^Y$ being incident to $\overline{v}_{0}^Y$.
\end{proof}

\begin{lm}\label{ladc6}
Let $x\in Y^0$ and $g \in G$, such that $x\in \mathrm{Fix}_Y(g)$. Assume that for every vertex $y\in Y$ belonging to the link of $x$ in $Y$, we have $d_{Y_x}(gy,y)\geq 3$.
Let $\gamma_1 := (x_0=x, x_1,\ldots, x_n)$ and $\gamma_2 :=(y_0=x, y_1,\ldots, y_m)$ be a geodesics in $Y$, such that $n\leq m$ and for all $i\leq n$ we have $gx_i=y_i$.
Then $\alpha:=\gamma_1\cup \gamma_2$ is a geodesic.
\end{lm}

\begin{proof}
The proof is analogous to the proof of Lemma \ref{lad} and has exactly the same structure.
We take a geodesic $\beta$ between $x_n$ and $y_m$ such that minimal area disc diagram $D$ for the path $\alpha\cup\beta$ has the smallest area. 

As previously, we want to use Lemma \ref{geo2}.
Therefore we need to show that assumptions of that Lemma are satisfied, and again we do it by showing that for any pair of vertices $u,v\in \alpha$, such that $x_n<u^Y<v^Y<y_m$, if we have $\kappa_D(u)=\kappa_D(v)=1$, then there is a vertex $w\in \alpha$, such that
$u^Y<w^Y<v^Y$ and $\kappa_D(w)\leq -1$.

First, observe that between $u$ and $v$ there is no vertex with curvature greater than $1$. A boundary vertex has curvature $3$ iff it is an end of the spur and $2$ iff it belongs to the boundary of exactly one triangle (see Figure \ref{fig:class2}).  By Proposition \ref{geo} such a vertex cannot be an internal vertex of a geodesic, so it has to be mapped to $x$. But for every vertex $y\in Y$ belonging to the link of $x$ in $Y$, we have $d_{Y_x}(gy,y)\geq 3$. Therefore, the curvature of $x$ is at most $0$.
Assume that there are no vertices of the curvature at most $-1$ between $u$ and $v$.
Then each vertex between $u$ and $v$ is incident to at most three triangles (see Figure \ref{fig:class2}).
We can assume that all vertices between $u$ and $v$ have curvature $0$ and by Figure \ref{fig:class2} each of them is incident to exactly three triangles.

By Proposition \ref{geo}, $u$ and $v$ cannot be internal vertices of the same geodesic. Therefore, we can assume that $u\in\gamma_1$, $v\in\gamma_2$. 

Analogously to the \cftfs version of this Lemma, we now want to find a tuple satisfying  the assumptions 1)-4) of Lemma \ref{laddyc6}. Therefore we take the sequence $u_N=u, u_{N-1},\ldots, u_1, u_0=x^D=v_0, v_1,\ldots, v_{M-1},v_M=v$ of consecutive vertices in $\partial D$ between $u$ and $v$.
Since $N<n$ we have $gu^Y_p=v^Y_p$ for $p\leq N$.

We assumed that $u_N,v_M$ have curvature $1$ and all of vertices between them have curvature $0$. Because of that the neighborhood of $u_N,\ldots, u_1, x, v_1,\ldots,v_M$ has the following form.
Vertex $x$ is adjacent to four vertices, two of them being $u_{1}$, $v_{1}$, we denote the remaining adjacent vertices by $\overline{u}_0,\overline{v}_0$ so that $(u_1,\overline{u}_0),(v_1,\overline{v}_0)\in D$.
For $0<s<N$ (resp. $0<p<M$) the vertex $u_s$ (resp. $v_p$) is adjacent to four vertices, two of them being $u_{s-1}$, $u_{s+1}$ (resp. $v_{p-1}$, $v_{p+1}$). One of the remaining two vertices is connected to $u_{s-1}$ (resp. $v_{p-1}$), we denote it by $\overline{u}_{s-1}$ (resp. $\overline{v}_{p-1}$), the last vertex is denoted by $\overline{u}_s$ (resp. $\overline{v}_p$).
Vertex $u_N$ (resp. $v_M$) is adjacent to three vertices, two of them being $u_{N-1}$, $\widehat{u}_{N-1}$ (resp. $v_{M-1}$, $\widehat{v}_{M-1}$), we denote the remaining adjacent vertex by $\overline{u}_{N}$ (resp. $\overline{v}_{M}$), see Figure \ref{fig:lla1sys}.

\begin{figure}[H]
\begin{center}
\includegraphics[scale=0.9]{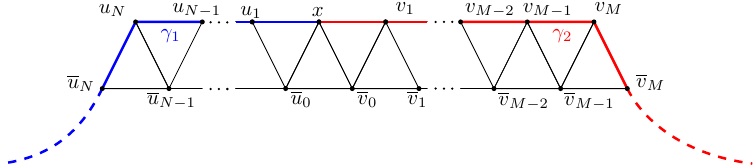}
\end{center}
\caption{The disc diagram $D$ in the neighborhood of $u_N,\ldots, u_1, x, v_1,\ldots,v_M$.}\label{fig:lla1sys}
\end{figure}

Let the sequence $\overline{u}_N^Y, \overline{u}_{N-1}^Y,\ldots, \overline{u}_1^Y,\overline{u}_0^Y,\overline{v}_0^Y, \overline{v}_1^Y,\ldots, \overline{v}^Y_{M-1},\overline{v}^Y_M$, be the corresponding sequence of vertices in $Y$. We have $\overline{u}_N^Y\in \gamma_1$ and $\overline{v}_M^Y\in \gamma_2$.

For any $0<p\leq M$, $0<s\leq N$ we define the triangles $V_p=[v^Y_{p-1},v^Y_p,\overline{v}^Y_{p-1}]$, $\overline{V}_p=[v^Y_p,\overline{v}^Y_p,\overline{v}^Y_{p-1}]$, $U_s=[u_{s-1}^Y,u_s^Y,\overline{u}_{s-1}^Y]$, and $\overline{U}_s=[u_s^Y,\overline{u}_s^Y,\overline{u}_{s-1}^Y]$ (we remind that $x=u_0^Y=v_0^Y$). We consider the subcomplex $S\subset Y$ consisting of $gU_1, \ldots, gU_N,g\overline{U}_1, \ldots, g\overline{U}_N, V_1,\ldots, V_M,\overline{V}_1,\ldots, \overline{V}_M$.
Let us remind that for $s\leq N$ we have $g u^Y_s=v^Y_s$. Similarly as in the \cftfs case, it is possible that $g\overline{u}^Y_s=\overline{v}^Y_s$, as shown in Figure \ref{fig:lla2sys}.
\begin{figure}[H]
\begin{center}
\includegraphics[scale=0.9]{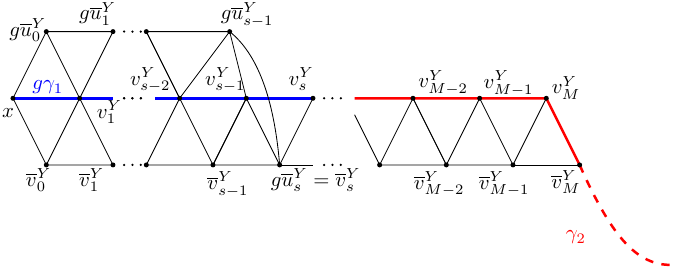}
\end{center}
\caption{Example of the subcomplex $S$. In this case $g\overline{u}^Y_s=\overline{v}^Y_s$ and $g\overline{u}^Y_{s-1}\neq\overline{v}^Y_{s-1}$.}\label{fig:lla2sys}
\end{figure}
It is clear that one of the following cases holds.

\begin{enumerate}[1)]

\item $g\overline{u}^Y_0=\overline{v}_0^Y$.

\item $g\overline{u}^Y_0\neq\overline{v}_0^Y$, and there is some $p\leq N$, such that $g\overline{u}^Y_p= \overline{v}^Y_p$ and $\overline{v}^Y_{s+1}\neq g\overline{u}^Y_s\neq \overline{v}^Y_s$ for $s < p$. If $p=N$ then we have $N=M$ as $\overline{v}^Y_N=g\overline{u}^Y_N \in \alpha$.

\item $g\overline{u}^Y_0\neq\overline{v}_0^Y$, and there is some $p\leq N$, such that $g\overline{u}^Y_p= \overline{v}^Y_{p+1}$ and $\overline{v}^Y_{s+1}\neq g\overline{u}^Y_s\neq \overline{v}^Y_s$ for $s < p$.

\item $g\overline{u}^Y_0\neq\overline{v}_0^Y$, $gu^Y\neq v^Y$ and for $s \leq N$ we have $\overline{v}^Y_{s+1}\neq g\overline{u}^Y_s\neq \overline{v}^Y_s$. In this case we have $g\overline{u}^Y_N=v^Y_{N+1}$.
\end{enumerate}

Like in the proof of Lemma \ref{lad} we will show that each of these cases results in a contradiction. 

In case 1)  $\overline{u}^Y_0$ belongs to the link of $x$ in $Y$ and $d_{Y_x}(g\overline{u}^Y_0,\overline{u}^Y_0)<3$, a contradiction.

In case 2) we have $g\overline{u}^Y_p= \overline{v}^Y_p$ and $gu^Y_{p-1}\neq v^Y_{p-1}$. It follows that there is a cycle of the length $4$ in the link of $v^Y_{p}$.
Figure \ref{fig:ll1sys} shows the complex $S$ in that case.
\begin{figure}[H]
\begin{center}
\includegraphics[scale=0.9]{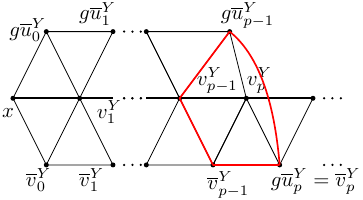}
\end{center}
\caption{Link of $v^Y_p$ (red).}\label{fig:ll1sys}
\end{figure}
Since $Y$ is systolic $\overline{v}^Y_{p-1}$ is incident to $g\overline{u}^Y_{p-1}$, as otherwise $\overline{v}^Y_p$ would be incident to $v^Y_{p-1}$, which contradicts the geodesity of $\gamma_2$ .

Clearly, the tuple 
$$
(u_{p-1}, u_{p-2},\ldots, u_1, x, v_1,\ldots, v_{p-1}, \overline{u}_{p-1}, \overline{u}_{p-2},\ldots, \overline{u}_1,\overline{u}_0,\overline{v}_0, \overline{v}_1,\ldots, \overline{v}_{p-1})
$$
satisfies the assumptions of Lemma \ref{laddyc6}.
Therefore one of the following holds:
\begin{enumerate}[(i)]
\item for some $1\leq s \leq p-1$ $(\overline{v}_{s}^Y, v_{s-1}^Y)\in Y$;
\item for some $1\leq s \leq p-1$ $(\overline{u}_{s}^Y, u_{s-1}^Y)\in Y$; 
\item $d_{Y_x}(g\overline{u}_{0}^Y,\overline{u}_{0}^Y)\leq 2$.
\end{enumerate}

In case (i) both $\overline{v}_M$ and $v_{s-1}$ are vertices belonging to the geodesic $\gamma_2$. Therefore $(v_{s-1},\ldots, v_N, \overline{v}_N)$ is a geodesic, but $(v_{s-1},\overline{v}_{s},\ldots, \overline{v}_N)$ is a shorter path between the same pair of vertices, a contradiction.

In case (ii) both $\overline{u}_N$ and $u_{s-1}$ are vertices belonging to the geodesic $\gamma_1$. Therefore $(u_{s-1},\ldots, u_N, \overline{u}_N)$ is a geodesic, but $(u_{s-1},\overline{u}_{s},\ldots, \overline{u}_N)$ is a shorter path between the same pair of vertices, a contradiction.

In case (iii) $\overline{u}_{0}^Y$ belongs to the link $x$ in $Y$ and $d_{Y_x}(g\overline{u}_{0}^Y,\overline{u}_{0}^Y)<3$, a contradiction.

In case 3) $g\overline{u}_p^Y=\overline{v}_{p+1}^Y$. But $g\overline{u}_p^Y$ is incident to $v_{p-1}^Y$. Both $\overline{v}_M$ and $v_{p-1}$ are vertices belonging to the geodesic $\gamma_2$. Therefore $(v_{p-1},\ldots, v_N, \overline{v}_N)$ is a geodesic, but $(v_{p-1},\overline{v}_{p+1},\ldots, \overline{v}_N)$ is a shorter path between the same pair of vertices, a contradiction.

In case 4) we have a double ladder with a cap in $Y$:

$$\{g\overline{u}^Y_{0},g\overline{u}^Y_{1},\ldots, g\overline{u}^Y_{N}=v_{N+1}^Y|x^Y, v^Y_{1},\ldots, v^Y_{N}|\overline{v}^Y_{0},\overline{v}^Y_{1}\ldots ,\overline{v}^Y_N\}$$ (see Figure \ref{fig:ll5sys}). 
\begin{figure}[H]
\begin{center}
\includegraphics[scale=0.9]{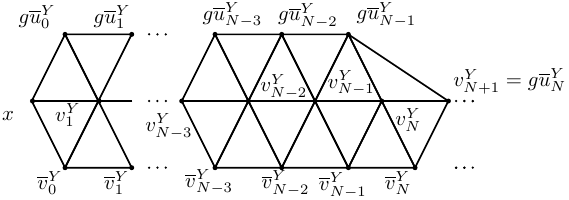}
\end{center}
\caption{Double ladder with a cap in $Y$. The triangle $[v_N^Y,v_{N+1}^Y,\overline{v}_{N}^Y]$ is a cap.}\label{fig:ll5sys}
\end{figure}

Clearly, the set 
$$
\{u_N, u_{N-1},\ldots, u_1, x, v_1,\ldots, v_{N}, \overline{u}_N, \overline{u}_{N-1},\ldots, \overline{u}_1,\overline{u}_0,\overline{v}_0,\overline{v}_1,\ldots, \overline{v}_{N}\}
$$
satisfies the assumptions of Lemma \ref{laddyc6}.
Again, each case following from this lemma results in a contradiction.

Therefore between any two vertices $x_n<u<v<y_m$ such that $\kappa_D(u)=\kappa_D(v)=1$ there is a vertex $w$, such that $u^Y<w^Y<v^Y$ and curvature of $w$ is at most $-1$. 
Therefore the sum of the curvature along any subpath of $\alpha$ is bounded by $1$ and we can use Lemma \ref{geo2} to show that $\alpha$ is a geodesic.
\end{proof}

\section{Rotations}\label{rotat}
Let $X$ be a simply connected \cftfs or \css small cancellation complex such that every $1$-cell of $X$ is contained in the boundary of a $2$-cell. Let $G$ be a finitely generated group acting on $X$ by automorphisms and assume that this action induces a free action on the $1$-skeleton $X^{1}$ of $X$. 

By the following proposition each group element acts by a rotation on each $2$-cell fixed by it.

\begin{pr}\label{rot}
Let $g\in G$ and $\widehat{x}\in \mathrm{Fix}_X(g)$. Then $g$ acts on $\widehat{x}$ by a rotation of finite order.
\end{pr}

\begin{proof}
Group $G$ acts on $X$ by automorphisms, therefore $g$ acts on $\widehat{x}$ by an isometry. Only possible isometries of $2$-cells are reflections and rotations. reflections does not act freely on $1$-skeleton, therefore $g$ acts by a rotation. 
Since $\widehat{x}$ is an $n$-gon for some $n$, then by the freeness of the action on $1$-skeleton $g$ has an order $m$ such that $m$ is a divisor of $n$. 
\end{proof}

We now consider \cftfs and \css cases separately.

\subsection{\cftfs case.}
Let $Y$ be the quadrization of $X$.

\begin{lm}\label{lmklcftf}
Let $\widehat{x}\in X$ be a $2$-cell fixed by $f\in G\setminus\{id\}$ and $x$ be a corresponding vertex in $Y$. 
Then there exists $k=k(f)$ such that for any $0$-cell $\widehat{y}$ belonging to $\widehat{x}$ we have $d_{Y_x}(f^ky,y)>2$.
\end{lm}

\begin{proof}
First observe that $d_{Y_x}(f^ky,y)\leq 2$ means that for some piece $p$ the intersection $p\cap f^kp$ is non-empty.

By Proposition \ref{rot}, $f$ is a rotation of some finite order $m$. Therefore, there exists $k_0$ such that $f^{k_0}$ is a `clockwise rotation through $\frac{2\pi}{m}$'. We claim that $k=\frac{mk_0}{2}$ for even $m$ and $k=\frac{(m-1)k_0}{2}$ for odd $m$ is as required.

Observe that if $m$ is even then $f^2k$ is a rotation through $2\pi$ and if it is odd then $f^3k$ is a rotation through at least $2\pi$. Indeed, since $m\geq 3$, we have $ 3\frac{(m-1)\pi}{m}\geq 2\pi$.
If for some piece $p$ the intersection $p\cap f^kp$ is non-empty, then we have three pieces $p, f^{k}p, f^{2k}p$ covering the whole boundary of $\widehat{x}$, see Figure \ref{fig:part2}. Contradiction with the $C(4)$ condition.
\end{proof}

\begin{figure}[h!]
\begin{center}
\includegraphics[scale=1.1]{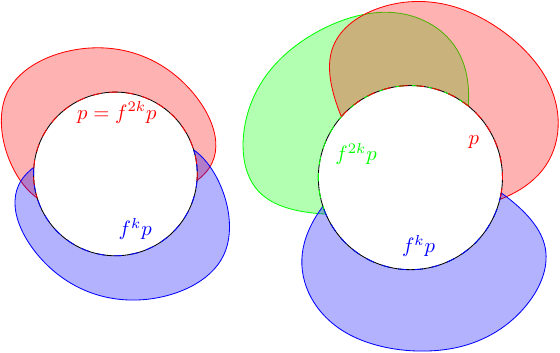}
\end{center}
\caption{Pieces covering whole boundary of the cell $\widehat{x}$ in the cases of even and odd $m$.}\label{fig:part2}
\end{figure}

\subsection{\css case.}
Let $\widehat{x}$ be a $2$-cell in $X$ and $v_1,v_2$ be a vertices from $\partial \widehat{x}$. By $(v_1,v_2)^{	\circlearrowright}$ we denote the clockwise path between $v_1$ and $v_2$ in $\partial \widehat{x}$. 
For any piece $p$ we assume that $p=(p_l,p_r)=(p_l,p_r)^{\circlearrowright}$.

Let $Y$ be a Wise complex of $X$. 
\begin{lm}\label{lmklcs}
Let $\widehat{x}\in X$ be a $2$-cell fixed by $f\in G$ and $x$ be a corresponding vertex in $Y$. 
Then either there exists $k=k(f)$ such that for any other $2$-cell $\widehat{y}$ we have $d_{Y_x}(f^ky,y)\geq 3$ or $f^3=id$.
\end{lm}

\begin{proof}
First observe that $d_{Y_x}(f^ky,y)< 3$ means that for some piece $p=\widehat{x}\cap\widehat{y}$ there's a piece $p'$ covering one of two arcs $(p_r,f^kp_l)^{\circlearrowright}$, $(f^kp_r,p_l)^{\circlearrowright}$.

By Proposition \ref{rot} $f$ is a rotation of some finite order $m$. Therefore, there exists $k_0$ (coprime with $m$) such that $f^{k_0}$ is a `clockwise rotation through $\frac{2\pi}{m}$'. We claim that $k=\frac{mk_0}{2}$ for even $m$ and $k=\frac{(m-1)k_0}{2}$ for odd $m$ is as required.

If for some piece $p$ there's a piece $p'$ covering one of two arcs $(p_r,f^kp_l)^{\circlearrowright}$, $(f^kp_r,p_l)^{\circlearrowright}$, then
if either $m$ is even, or $p'$ covers $(f^kp_r,p_l)^{\circlearrowright}$ we have that $f^{-k}p', p, p'$ and $f^kp$ covers whole boundary of the cell, a contradiction to the \css condition (see left part of the Figure \ref{fig:part2cs}).

If $m$ is odd and $p'$ covers $(p_r,f^kp_l)^{\circlearrowright}$.
we have three possible cases:
\begin{enumerate}
\item $p\cap f^{2k}p\neq \emptyset$;
\item $f^qp\cap f^rp=\emptyset$ for any $q\not\equiv r\pmod{m}$ and $\frac{3(m-1)}{2}>m$;
\item $\frac{3(m-1)}{2}\leq m$.
\end{enumerate} 
In the first case  $p, p', f^kp, f^kp'$ and $f^{2k}p$ cover the whole boundary, a contradiction (see central part of the Figure \ref{fig:part2cs}).
In the second case we have $\frac{2(m-1)}{2}<m<\frac{3(m-1)}{2}$ and in particular, since $k_0$ is coprime with $m$, we have $2k\not\equiv 0\pmod{m}$ and $3k\not\equiv 0\pmod{m}$. It follows that $f^{2k}p\cap p=f^{3k}p\cap p=\emptyset$, hence $f^{2k}p'\supset p$.
Thus $p', f^{k}p, f^kp',f^{2k}p$ and $f^{2k}p'$ cover the whole boundary, a contradiction (see right part of the Figure \ref{fig:part2cs}).
In the third case $m=3$.
\begin{figure}[h!]
\begin{center}
\includegraphics[scale=0.85]{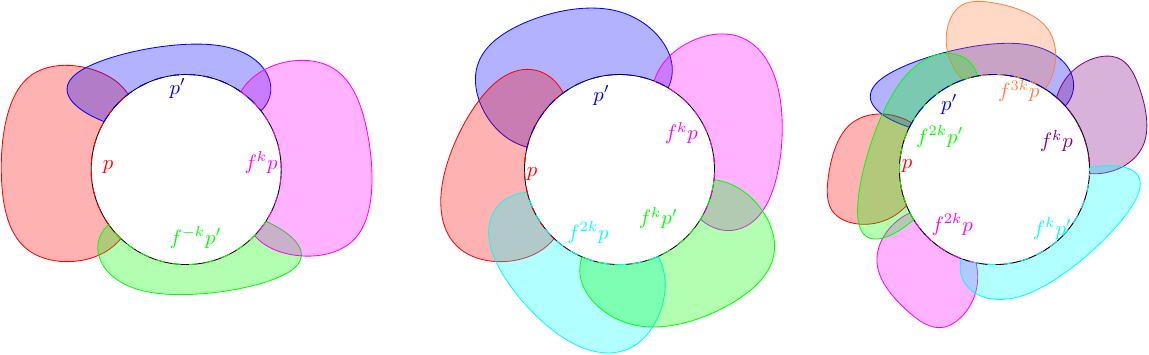}
\end{center}
\caption{Pieces covering whole boundary of the cell $\widehat{x}$.}\label{fig:part2cs}
\end{figure}
\end{proof}

\section{Lack of global fixed point implies existence of group element of infinite order}\label{inford}

In this section we study closer the fixed points of the action of $G$ on $X$.
First, we show that under assumptions from the previous section, each element of $G$ can fix at most one $2$-cell in $X$, equivalently at most one vertex in $Y$.

\begin{lm}\label{fixv}
	Let $g \neq 1$ be an element of $G$ such that $\mathrm{Fix}_Y(g)\neq\emptyset$. If $v \in \mathrm{Fix}_Y(g)$ then $\mathrm{Fix}_Y(g)=\{v\}$.
\end{lm}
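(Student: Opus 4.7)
The plan is to derive a contradiction by building a loop from two geodesics between $v$ and a second fixed vertex and running Gauss--Bonnet on a minimal disc diagram filling it. Suppose $\mathrm{Fix}_Y(g)$ contains some $v' \neq v$. First I apply Lemma \ref{lmkl} at $v$ to obtain $h := g^k$ with $h \neq 1$ such that $F \cap hF = \{v\}$ for every square $F \in Y$ at $v$. Since $g$ fixes both $v$ and $v'$, so does $h$, and I then choose $v'' \in \mathrm{Fix}_Y(h) \setminus \{v\}$ with $d := d_Y(v, v'')$ \emph{minimal}; by Lemma \ref{sim} applied to $h$, the case $d = 2$ forces $h = 1$, so $d \geq 4$. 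Fix a geodesic $\gamma = (v = y_0, y_1, \ldots, y_d = v'')$ in $Y$. For every $0 < i < d$ I have $hy_i \neq y_i$: when $i$ is odd, $y_i \in X_0$ is moved by the free action of $h \neq 1$ on $X^1$; when $i$ is even, fixing $y_i$ would give $\{v, y_i\} \subseteq \mathrm{Fix}_Y(h)$ at distance $i < d$, contradicting minimality. Moreover the equality $hy_i = y_j$ forces $i = j$, since both sides are at $Y$-distance $i$ respectively $j$ from $v = hv$. Hence the internal vertices of $\gamma$ and $h\gamma$ are disjoint, so $\alpha := \gamma \cdot (h\gamma)^{-1}$ is a \emph{simple} closed loop in $Y$. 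Let $D$ be a minimal area disc diagram for $\alpha$; by Proposition \ref{quad} it is a CAT(0) square disc diagram, and since $\alpha$ is simple the boundary cycle $\partial D$ is itself simple in $D$, so $D$ is nonsingular.

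I now bound the total boundary curvature of $D$. The two ``corners'' $v$ and $v''$ are not spurs, because their incident boundary edges are $y_1 \neq hy_1$ and $y_{d-1} \neq hy_{d-1}$ respectively. The crucial step---and the main obstacle---is to show $\kappa_D(v) \leq 0$. Were $\kappa_D(v) = \pi/2$, the vertex $v$ would be incident to exactly one square $F$ of $D$ bounded by the edges $v$-$y_1$ and $v$-$hy_1$, so its image $F^Y$ in $Y$ would be a square of the form $[y_1, v, hy_1, \ast]$ at $v$; then $hF^Y$ is another square at $v$ with $F^Y \cap hF^Y \supseteq \{v, hy_1\}$, contradicting the defining property of $h$ from Lemma \ref{lmkl}. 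At $v''$ I only have the generic bound $\kappa_D(v'') \leq \pi/2$. For the internal vertices of $\gamma$, note that $\gamma$ remains a geodesic in $D$ (any shortcut would descend to one in $Y$), so Proposition \ref{geo} rules out curvature $\pi$ and places a vertex of curvature $\leq -\pi/2$ between any two vertices of curvature $\pi/2$; a short telescoping bounds the sum of curvatures of $\gamma$'s internal vertices by $\pi/2$. The same argument bounds the internal-vertex contribution of $h\gamma$ by $\pi/2$.

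Summing, the total boundary curvature satisfies
$$\sum_{w \in \partial D}\kappa_D(w) \leq 0 + \frac{\pi}{2} + \frac{\pi}{2} + \frac{\pi}{2} = \frac{3\pi}{2} < 2\pi,$$
contradicting the Gauss--Bonnet inequality of Proposition \ref{gb}. Hence no such $v'$ exists, i.e.\ $\mathrm{Fix}_Y(g) = \{v\}$.
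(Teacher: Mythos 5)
Your proof is correct and follows essentially the same route as the paper: take a geodesic $\gamma$ from $v$ to a second fixed vertex, close it up with its $g^k$-translate for the $k$ of Lemma \ref{lmkl}, fill by a minimal (hence CAT(0)) disc diagram, and contradict Gauss--Bonnet using Proposition \ref{geo} and the fact that $\kappa(v)\leq 0$. The only differences are cosmetic: you normalize by choosing the second fixed vertex at minimal distance (where the paper shortens $\gamma$ past any internally fixed vertex) and you exclude a spur at $v''$ directly via freeness on $X_0$ (where the paper derives $\kappa(v')=\pi$ and then contradicts its normalization).
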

\begin{proof}
	Assume that there is another $2$-cell such that $\widehat{v}'\in \mathrm{Fix}_X(g)$. 
	
	First, we consider the \cftfs case. 
	Let $\gamma := (v_0 :=v, v_1,\ldots, v_n:=v')$ be a geodesic in $Y$. 
	Let $k$ be given by Lemma \ref{lmklcftf} and $\alpha = \gamma \cup g^k\gamma$. By the choice of $k$ for any $u$ incident to $v$ we have $d_{Y_v}(g^ku,u)>2$.
	Therefore for every square $P\in Y$ containing $v$ in its boundary we have $P\cap g^kP=\{v\}$. By Lemma \ref{lad} $\alpha$ is a geodesic. Therefore $\gamma=g^k\gamma$. If $v\neq v'$ then $g^k$ fixes a vertex in $X$, contradiction with the freeness of the action on $1$-skeleton. 
	
	Now we consider \css case.
	We note here that because of case $g^3\neq id$ this case cannot be proved analogously to \cftfs case.
	Consider the set of all geodesics between $v$ and $v'$ in the complex $Y$. Since $Y$ is systolic, the set of all vertices incident to $v'$ belonging to these geodesics spans an $n$-simplex for some $n>0$. This simplex is fixed by $g$. Since $Y$ is a Wise complex, $g$ fixes an intersection of all $2$-cells corresponding to vertices of this simplex. That intersection is either a piece or a vertex. Contradiction with the freeness of the action on $1$-skeleton. 
\end{proof}

The aim of the rest of this section is to prove that for any two elements of $G$ that do not fix the same $2$-cell in $X$ there exists an element of $G$ of infinite order. By the following lemma such an element cannot fix any cell of $X$.

\begin{lm}\label{tr}
Let $g\in G$. If $\mathrm{Fix}_X(g)\neq \emptyset$ then $g$ has finite order.
\end{lm}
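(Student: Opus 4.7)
The plan is to reduce the statement to the observation that the setwise stabilizer of any $2$-cell of $X$ is finite; since a non-identity $g$ with a fixed point must lie in such a stabilizer, the conclusion will follow.

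First, I would dispose of the trivial case $g=1$, and otherwise assume $g \neq 1$. Since the action of $G$ on $X^{1}$ is free, $g$ has no fixed point on $X^{1}$, and so any fixed point of $g$ in $X$ must lie in the interior of some $2$-cell $e^{2}$ of $X$. Because $g$ is a combinatorial automorphism of $X$, it sends open cells homeomorphically onto open cells; as an open cell is determined by any of its interior points, we must have $g(e^{2}) = e^{2}$, and by continuity $g$ then stabilizes the closure $\overline{e^{2}}$ setwise.

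Next I would extract finiteness from this stabilization. The attaching polygon $R$ of $e^{2}$ has finitely many ($n \geq 1$) $0$-cells, so the set $V$ of $0$-cells of $X$ lying on $\partial e^{2}$ is finite. Since $g$ stabilizes $\overline{e^{2}}$, it permutes $V$, and hence some power $g^{k}$ fixes every vertex of $V$ pointwise. These vertices all lie in $X^{1}$, so the freeness of the $G$-action on $X^{1}$ forces $g^{k} = 1$. Thus $g$ has finite order.

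The only mildly delicate point is verifying the equality $g(e^{2}) = e^{2}$ rather than merely $g(e^{2}) \subset \overline{e^{2}}$: this uses that combinatorial automorphisms permute open cells and that distinct open cells are disjoint, so an open cell is determined by any of its interior points. Beyond this, the argument is purely formal and invokes no small cancellation property of $X$ beyond what is already built into the standing hypotheses of Section~\ref{inford}; in particular, no quadrization, curvature, or disc diagram machinery is needed for this lemma.
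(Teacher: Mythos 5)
Your proposal is correct and follows essentially the same route as the paper: a fixed point off the $1$-skeleton forces $g$ to stabilize a $2$-cell, hence to permute its finitely many boundary vertices, so some power of $g$ fixes a vertex and is therefore trivial by freeness on $X^{1}$. Your extra care about why $g(e^{2})=e^{2}$ (open cells being permuted and pairwise disjoint) is a point the paper leaves implicit, but the argument is the same.
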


\begin{proof}
Let $\widehat{v}\in \mathrm{Fix}_X(g)$. It is a $2$-cell in $X$, therefore for some $n$ it is an $n$-gon. Let $\widehat{v}'$ be a vertex belonging to the boundary of $\widehat{v}$. Since $G$ acts by automorphisms on $X$ and $g$ fixes $\widehat{v}$, there exists $m>0$ such that $g^m\widehat{v}'= \widehat{v}'$. By the assumption of the freeness of the action on the $1$-skeleton $g^m$ is trivial.
\end{proof}

\begin{lm}\label{inf}
Let $X$ be either \cftfs or \css complex. 
If $\mathrm{Fix}_Y(f) \neq \mathrm{Fix}_Y(g)$ and:
\begin{itemize}
\item $X$ is a \cftfs complex and $k=k(f),l=k(g)$ are given by Lemma \ref{lmklcftf};
\item $X$ is a \css complex and $k=k(f),l=k(g)$ are given by Lemma \ref{lmklcs},
\end{itemize} 
 
then $f^kg^l$ has infinite order.
\end{lm}

\begin{proof}
Let $\gamma := (x_0 :=y, x_1,\ldots, x_n:=x)$ be a geodesic in $Y$. Consider the images of $\gamma$ under $(f^kg^l)^i$ and $(f^kg^l)^if^k$.
Since $G$ acts by automorphisms and $\gamma$ is a geodesic, $(f^kg^l)^i\gamma$ and $(f^kg^l)^if^k\gamma$ are also geodesics for any $i$.

We will prove that $$\alpha_i:=\bigcup\limits_{0\leq j\leq i} (f^kg^l)^j(\gamma\cup f^k\gamma)$$ is a geodesic for any $i$. In such a case $f^kg^l$ has an infinite order.

We argue by induction: first, observe that if $k,l$ are given by Lemma \ref{lmklcftf} (resp. by Lemma \ref{lmklcs}) then the conditions of Lemma \ref{lad} (resp. of Lemma \ref{ladc6}) are satisfied. It follows that $\alpha_0$ is a geodesic.

Now, we will show that the geodesity of $\alpha_{i+1}$ follows from the geodesity of $\alpha_i$.

To do that we first need to show that $\alpha_i\cup (f^kg^l)^{i+1}\gamma$ is a geodesic, this follows from Lemma \ref{lad} in the \cftfs case (resp. Lemma \ref{ladc6} in the \css case). Indeed, $\alpha_i$ and $(f^kg^l)^{i+1}\gamma$ satisfy the conditions of Lemma then their concatenation is a geodesic.

But then $\alpha_i\cup (f^kg^l)^{i+1}\gamma$ and 
$(f^kg^l)^{i+1}f^k\gamma$ also satisfy the conditions of Lemma \ref{lad} (resp. Lemma \ref{ladc6}). Therefore 
$$\alpha_{i+1}=\alpha_i\cup (f^kg^l)^{i+1}\gamma\cup (f^kg^l)^{i+1}f^k\gamma$$
is a geodesic. 

By induction, $\alpha_i$ is a geodesic for any $i$.
\end{proof}

This Lemma finishes the case of \cftfs complexes, and it remains to complete the case of \css complexes. We remind here that in the case of \css complexes, it is possible that for a given $f$, we cannot find $k$ such that $d_{Y_x}(f^ky,y)\geq 3$. 
In such a case by Lemma \ref{lmklcs} we know that $f^3= id$.

\begin{lm}\label{inffinc6}
Let $X$ be \css complex. If $G$ does not have a global fixed point, then there exists an element of infinite order in $G$. 
\end{lm}

\begin{proof}
Assume that $G$ does not have a global fixed point.
Let $f,g$ be a group elements such that $\{x\}=\mathrm{Fix}_Y(f) \neq \mathrm{Fix}_Y(g)=\{y\}$. We have three possible cases:
\begin{enumerate}
\item $f^3\neq id \neq g^3$;
\item exactly one of $f,g$ has order $3$;
\item $f^3=g^3=id$.
\end{enumerate}

In the first case, by Lemma \ref{lmklcs} there exist $k=k(f),l=k(g)$ such that the conditions of Lemma \ref{inf} are satisfied, therefore $f^kg^l$ has an infinite order.

In the second case without loss of generality we can assume that $f$ has the order $3$ and $g$ does not. Consider the conjugation of $g$ by $f$. It has same the order as $g$ and fixes the vertex $fy$. If $fy=y$ then $f$ fixes $y$, a contradiction. 
By Lemma \ref{lmklcs} there exists $l=k(g)$ such that the conditions of Lemma \ref{inf} are satisfied, therefore $fg^lf^{-1}g^l$ has an infinite order.

In the third case, observe that there is an element $h$ of the order not equal to $3$ in the subgroup $\langle f,g\rangle$. 
Indeed, if all elements of $\langle f,g\rangle$ have order $3$, then this subgroup is a quotient of the Free Burnside group $B(2,3)$ which is finite. This subgroup acts on a Wise complex, which is systolic. By \cite[Theorem C]{CO15}  a finite group acting on a systolic complex has a global fixed point, a contradiction to the fact that $\mathrm{Fix}_Y(f)\neq\mathrm{Fix}_Y(g)$.

Since $\mathrm{Fix}_Y(f)\neq\mathrm{Fix}_Y(g)$ then at least one of $\mathrm{Fix}_Y(f)$, $\mathrm{Fix}_Y(g)$ is not equal to $\mathrm{Fix}_Y(h)$.
Thus clearly at least one of $\mathrm{Fix}_Y(fhf^{-1})$, $\mathrm{Fix}_Y(ghg^{-1})$ is not equal to $\mathrm{Fix}_Y(h)$.
By Lemma \ref{lmklcs} there exists $k$ such that the conditions of Lemma \ref{inf} are satisfied either for $h$ and $fhf^{-1}$ or for $h$ and $ghg^{-1}$. It follows that at least one of $fh^kf^{-1}h^k, gh^kg^{-1}h^k$ has infinite order.
\end{proof}

\section{\cttss is CAT(0)}\label{catct}
In this section we prove Theorem \ref{thm:tF} and Corollaries \ref{thm:tE}-\ref{thm:tD}.

\begin{df}
A combinatorial $2$-complex $X$ is called a \textit{polygonal complex} if an intersection of any two closed cells of $X$ is either empty or exactly one closed cell. 
\end{df}

\begin{pr}
Any simply connected $T(6)$ complex is a polygonal complex.
\end{pr}
\begin{proof}
By the result of Pride \cite{Pride}, all pieces in $T(6)$ complex are of the length $1$ therefore each piece is exactly a closed $1$-cell. It follows that any non-empty intersection of two closed cells of $X$ consist of exactly one closed cell.
\end{proof}

We can view the edges of $X$ as segments of the length $1$ and the closed $2$-cells of $X$ as a regular Euclidean polygons of side length $1$. This induces a metric in $X$. It gives us a criterion for a polygonal complex to be CAT$(0)$. In the following definition the length of an edge in the link of $v$ is the angle in the corresponding polygon of $X$. 

\begin{df}
A polygonal complex $X$ with a metric $d$ satisfies \textit{the link condition} if for each vertex $v\in X$ every injective cycle in the link of $v$ has length at least $2\pi$.
\end{df}

Observe that \cttss complex $X$ does not necessarily have bounded size of $2$-cells, therefore we consider a complex $\mathfrak{X}$, which consists of barycentric subdivision of each cell in $X$.  
It is easy to see that $\mathfrak{X}$ is a triangle complex, and each triangle has one vertex corresponding to a center of a $2$-cell from $X$, one to a center of a $1$-cell and one which is a $0$-cell in $X$.

Clearly $\mathfrak{X}$ is a triangle complex, but it does not satisfy the link condition with metric induced by taking each $2$-cell to be a regular Euclidean triangle of the side length $1$.  
Therefore, we induce another metric $\mathfrak{d}$ in the following way: we take all $2$-cells to be Euclidean triangles with angle $\frac{\pi}{2}$ adjacent to a center of an $1$-cell of $X$, angle $\frac{\pi}{3}$ adjacent to a center of a $2$-cell of $X$ and angle $\frac{\pi}{6}$ adjacent to a $0$-cell from $X$.

\begin{proof}[Proof of Theorem \ref{thm:tF}]
It is enough to show that the complex $\mathfrak{X}$ with metric $\mathfrak{d}$ is CAT(0). Since $\mathfrak{X}$ with that metric has only one shape of $2$-cells, therefore by \cite[Lemma 5.6]{bri09} $\mathfrak{X}$ is CAT(0) as long as it satisfies the link condition i.e. we have to show that every injective cycle in each link has the length at least $2\pi$.

Let $v$ be a vertex from $\mathfrak{X}$. 
If $v$ is a $0$-cell in $X$ then each corner has length at least $\frac{\pi}{6}$. 
Each cycle in the link of $v$ consists of at least $12$ corners. Indeed, $X$ satisfies the condition $T(6)$, and each $2$-cell adjacent to $v$ in $X$ is replaced in the link by two triangles in $\mathfrak{X}$. Each corner has length at least $\frac{\pi}{6}$, thus each cycle has length at least $2\pi$.

If $v$ is a center of an $1$-cell, then each cycle in the link of $v$ has at least four corners, each of length $\frac{\pi}{2}$, so clearly each cycle has length at least $2\pi$.

If $v$ is a center of a $2$-cell, then each cycle in the link of $v$ has at least six corners, each of length $\frac{\pi}{3}$, so clearly each cycle has length at least $2\pi$.
\end{proof}

\begin{proof}[Proof of Corollary \ref{thm:tE}]
Let a finitely generated group $G$ act locally elliptically on a simply connected \cttss small cancellation complex $X$. Define $\mathfrak{X}$ and $\mathfrak{d}$ as above. Then $\mathfrak{X}$ has \emph{rational angles} with respect to $G$ in the sense of \cite[Definition 2.3]{NOP-D}. This follows from the fact that all triangles of $\mathfrak{X}$ have angles $\frac{\pi}{2},\frac{\pi}{3},\frac{\pi}{6}$ and from an observation on \cite[page 9]{NOP-D} just after \cite[Definition 2.3]{NOP-D}. Corollary follows from \cite[Theorem 1.1(iii)]{NOP-D}.
\end{proof}

\begin{proof}[Proof of Corollary \ref{thm:tD}]
Let $X$ be a simply connected \cttss small cancellation complex acted upon by $G$. Then, by Theorem \ref{thm:tF}, $G$ acts almost freely on the CAT$(0)$ complex $\mathfrak{X}$.
By \cite[Theorem A]{OsPrz22}, the group $G$ is virtually cyclic, or virtually $\mathbb{Z}^2$, or contains a nonabelian free subgroup.
\end{proof}

\begin{rem}
	Another way of proving Corollary \ref{thm:tD} is applying \cite[Main Theorem]{OsPrz21}. One observes that $\mathfrak{X}$ is \emph{reccurent} with respect to $G$ in the sense of \cite[Definition 2.1]{OsPrz21}.
	This is by Theorem \ref{thm:tF} and \cite[Remark 2.3]{OsPrz21}, because $\mathfrak{X}$ satisfies \cite[Definition 2.1(v)]{OsPrz21}. Moreover, $\mathfrak{X}$ admits a simplicial map to one triangle with angles $\frac{\pi}{2},\frac{\pi}{3},\frac{\pi}{6}$, whose restriction to each triangle of $\mathfrak{X}$ is an isometry.
	It follows that by \cite[Example 2.5]{OsPrz21} the complex $\mathfrak{X}$ satisfies \cite[Definition 2.1(i)-(iv)]{OsPrz21}. 
\end{rem}

Thanks to $\mathfrak{X}$ being CAT(0) we can also deduce the following Lemma that will be used in the proof of Theorem \ref{thm:tB}.

\begin{lm}\label{cttsfix}
Let $G$ act on \cttss complex $X$ by automorphisms such that the action induces a free action on the $1$-skeleton $X^1$ of $X$.
Let $g \neq 1$ be an element of $G$ such that $\mathrm{Fix}_X(g)\neq\emptyset$. If $v \in \mathrm{Fix}_X(g)$ then $\mathrm{Fix}_Y(g)=\{v\}$.
\end{lm}

\begin{proof}
Assume that $g$ has two fixed points $v\neq v'$. It is clear that $v$ and $v'$ are both centers of $2$-cells. The (unique) geodesic $\gamma$ between $v$ and $v'$ in $\mathfrak{X}$ is fixed by $g$. Since $\gamma$ has non-empty intersection with $X^1$, we get a contradiction.
\end{proof}

\section{Proofs of Theorems \ref{thm:tB} and \ref{thm:tC}}\label{sec: GASCC}

In our case of $G$ acting on $X$ by automorphisms in such a way that the action induces a free action on the $1$-skeleton $X^{1}$ of $X$, if the action is additionally locally elliptic, each element fixes a $2$-cell, equivalently, the center of a $2$-cell.

\begin{proof}[Proof of Theorem \ref{thm:tB}]
Assume that the action of $G$ on $X$ does not have a global fixed point.

In the \cftfs and \css cases we first observe the following.
Each $1$-cell of $X$ that is not contained in the boundary of a $2$-cell can be thickened to a $2$-cell to obtain a new $2$-complex $X'$ which deformation retracts to $X$.
The complex $X'$ is a \cftfs (or \cs) small cancellation complex. Moreover, the complex $X$ embeds into $X'$ and the action of $G$ is preserved, therefore the action of $G$ induces a free action on the $1$-skeleton of $X'$ and does not have a global fixed point. This allows us to use Lemmas from Sections \ref{negcuv}-\ref{inford}.
 
By Lemma \ref{fixv} an element of the group $G$ can fix at most one $2$-cell of $X'$. Assume that $f,g$ are elements of $G$ such that $\mathrm{Fix}(f)\neq \mathrm{Fix}(g)$. Then in \cftfs case by Lemma \ref{inf} there exist $k$, $l$ such that $f^kg^l$ has an infinite order. In \css case by Lemma \ref{inffinc6} there exist an element of an infinite order.
By Lemma \ref{tr} only elements of finite order 
can fix a $2$-cell, hence $G$ is not locally elliptic.

In the \cttss case by Corollary \ref{thm:tE} any finitely generated subgroup of $G$ has a global fixed point. From Lemma \ref{cttsfix} each nontrivial element of $G$ has exactly one fixed point.
As a consequence, any two nontrivial elements of $G$ fix the same point.

Therefore, in the \aotors cases there exists a $2$-cell fixed by all elements of $G$. It is an $n$-gon for some $n$. Then, by freeness of the group action on the $1$-skeleton of $X$, $G$ is finite and cyclic.
\end{proof}

We now pass to the proof of Theorem \ref{thm:tC}. For a group $G$ and a metric space $X$, a group action $\Phi:G\rightarrow Isom(X)$ is called \textit{proper} if for each $x\in X$ there exist a real number $r>0$ such that the set $\{g\in G | (B_r(\Phi(g)(x))\cap B_r(x))\neq 0\}$ is finite.
The group action $\Phi$ is called \textit{cocompact} if there exists a compact subset $K\subseteq X$ such that $\Phi(G)(K)=X$. We say that $\Phi$ is a \textit{geometric} action if it is both proper and cocompact.

A group $G$ is called \textit{artinian} if any descending chain of subgroups $G_1\supset G_2\supset \ldots$ becomes stationary, that is, $G_n=G_{n+1}=\ldots$ from some $n$ onwards.

Let $\mathcal{G}$ denote the class of all groups $G$ with the following three properties:
\begin{enumerate}[(i)]
\item $G$ does not include $\mathbb{Q}$ or the $p$-adic integers $\mathbb{Z}_p$ for any prime as a subgroup;
\item $G$ does not include the Pr\"ufer $p$ group $\mathbb{Z}(p^{\infty})$ for any prime as a subgroup.\end{enumerate}

\begin{pr}
If $G$:
\begin{enumerate}
\item is a CAT(0) group; or
\item is a Helly group; or
\item is a systolic group;
\end{enumerate}
then $G$ is in the class $\mathcal{G}$.
\end{pr}
\begin{proof}

Cases (1) and (2) are known by Prop 5.2 of \cite{keppeler2021automatic}.

Case (3). 
A systolic group is finitely generated, in particular it is countable, so it cannot contain an uncountable subgroup $\mathbb{Z}_p$.
It is known that an abelian subgroup of a systolic group is finitely generated, see \cite{OP18}, thus systolic group cannot have $\mathbb{Q}$ as a subgroup.
Furthermore, any systolic group contains only finitely many conjugacy classes of
finite subgroups: \cite[Corollary 1.3.]{przytycki_2008}, also \cite{CO15}. Therefore, there is a bound on the order of finite order elements in a systolic group, and therefore this group can not have Pr\"ufer $p$ group $\mathbb{Z}(p^{\infty})$ as a subgroup.
\end{proof}

\begin{proof}[Proof of Theorem \ref{thm:tC}]
By Theorem B of \cite{keppeler2021automatic} the statement holds for subgroups of any group belonging to the class $\mathcal{G}$ whose torsion subgroups are artinian.
In the \cftfs case, by Theorem 6.18 of \cite{Helly}, $G$ is Helly because it acts geometrically on $X$.
In the \css case, $G$ is systolic because it acts geometrically on the Wise Complex, which is systolic.
In the \cttss case, by Theorem \ref{thm:tF}, $G$ acts geometrically on a CAT(0) complex, therefore it is CAT(0).

In the \cftfs and \css cases, if $H$ is a torsion subgroup of $G$ then its action is locally elliptic on $X$, and by Theorem \ref{thm:tB} we know that the subgroup $H$ has to be finite.
In \cttss case, if $H$ is a torsion subgroup of $G$ then it acts almost freely on $X$, and by Theorem \ref{thm:tD} it is finite.
It is clear that finite groups are artinian.
\end{proof}

\section{Proof of Theorem \ref{thm:tA}}\label{sec: PTA}

Let $\langle X| R\rangle$ be a presentation of a group $G$. The \textit{presentation complex} of $\langle X| R\rangle$ is formed by taking a unique $0$-cell, adding a labeled oriented $1$-cell for each generator, and then attaching a $2$-cell along the closed combinatorial path corresponding to each relator.

The \textit{Cayley complex} of $G$ with respect to the presentation $\langle X|R\rangle$ (denoted $Cayley(G,X,R)$) is constructed in the following way. Let the set of vertices of $Cayley(G,X,R)$ consist of all the elements of $G$. Then, at each vertex $g\in G$, insert a directed edge from $g$ to $gx$ for each of the generators $x\in X$. The translation of any relator $r\in R$ by any element of the group $G$ gives a loop in the graph. We attach a $2$-cell to each such loop. The $1$-skeleton of the Cayley complex is a directed graph known as the \textit{Cayley graph}. It is known \cite[Section 1.3]{AH} that the Cayley complex is a universal cover of the presentation complex.

The presentation $\langle X| R\rangle$ of the group $G$ is a $C(p)$--$T(q)$ \textit{small cancellation presentation} if its presentation complex is a $C(p)$--$T(q)$ complex. In such a case, since the Cayley complex with respect to the presentation $\langle X| R\rangle$ is a universal cover of the presentation complex, it is a $C(p)$--$T(q)$ complex as well \cite{MW}. 

A \textit{spherical diagram} $S$ is a $2$-sphere $\mathbb{S}^2$ with a structure of a combinatorial $2$-complex.
As in the case of disc diagrams, a diagram $S$ \textit{in} $X$ is $S$ along with a combinatorial map from $S$ to $X$ denoted by $S\rightarrow X$. 

A presentation of a group is \textit{aspherical} if there are no 'non-trivial' spherical diagrams in $Cayley(G,X,R)$ in the sense of \cite[III.10, p.156]{ls}. The following theorem states one of the known properties of groups with a small cancellation presentation.

\begin{thm}\cite[Theorem 4]{huebschmann1979cohomology}
Any \aotors small cancellation presentation is aspherical.
\end{thm}

Before giving a proof of Theorem \ref{thm:tA} we need to state a theorem of Huebschmann concerning groups with aspherical presentations.

\begin{thm}\label{hueth3}\cite[Theorem 3]{huebschmann1979cohomology}
Let $G$ be a group with an aspherical presentation $\langle X|R\rangle$. If $x\in G$ is an element of order $1<s<\infty$, then there is a relator $r=z^{q_r}_r$ with $s|q_r$ such that $x$ is conjugate to $z^{q_r/s}_r$.
\end{thm}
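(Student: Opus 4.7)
The plan is to work in the universal cover $\tilde{K} := \mathrm{Cayley}(G, X, R)$ and exploit the rigidity of the $G$-action there. First I would promote asphericity to contractibility of $\tilde{K}$: asphericity gives $\pi_2(\tilde{K}) = 0$, and combined with simple-connectedness (as a universal cover) and $2$-dimensionality, the Hurewicz and Whitehead theorems yield contractibility. The group $G$ acts cellularly on $\tilde{K}$ by left multiplication, freely on vertices and edges, and the setwise stabilizer of a $2$-cell attached along a relator $r = z_r^{q_r} \in R$ (with $z_r$ not a proper power in $F(X)$) based at a vertex $g$ is precisely the cyclic rotation group $\langle g z_r g^{-1} \rangle$ of order $q_r$ in $G$, acting on the cell by Euclidean rotations that fix only the barycenter.

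Second I would show that the finite cyclic subgroup $H := \langle x \rangle$ of order $s$ fixes some $2$-cell of $\tilde{K}$. For any prime $p \mid s$, let $H_p \leq H$ denote the Sylow $p$-subgroup. Since $H_p$ is a nontrivial finite $p$-group acting on the contractible CW complex $\tilde{K}$, Smith theory ensures that $\mathrm{Fix}(H_p)$ is nonempty and mod-$p$ acyclic. But $H_p$ fixes no vertex or edge of $\tilde{K}$ (as $G$ acts freely on them), and inside any $H_p$-invariant $2$-cell the only $H_p$-fixed point is the barycenter; therefore $\mathrm{Fix}(H_p)$ is a discrete set of barycenters. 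Since mod-$p$ acyclicity of a discrete space forces it to be a single point $c_p$, and $H$ is abelian and hence preserves $\mathrm{Fix}(H_p) = \{c_p\}$, we conclude that $H$ fixes $c_p$, so $H$ fixes the corresponding $2$-cell setwise.

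Third I would derive the conjugacy statement. If the $H$-fixed $2$-cell is attached along $r = z_r^{q_r}$ based at $g$, then $H \leq \langle g z_r g^{-1}\rangle$, cyclic of order $q_r$ in $G$. Order considerations force $s \mid q_r$, and $x = g\,z_r^{(q_r/s)\,m}\,g^{-1}$ for some $m$ coprime to $s$. After suitably replacing $z_r$ by a cyclic rotation (which amounts to shifting the basepoint $g$ around the boundary of the $2$-cell) and, if necessary, adjusting the relator representative, one normalizes $m = 1$, obtaining that $x$ is conjugate to $z_r^{q_r/s}$ in $G$.

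The main obstacle, in my view, is the second step. A purely homological attempt, viewing the cellular chain complex $C_\ast(\tilde{K})$ as a $\mathbb{Z}H$-resolution of $\mathbb{Z}$ and noting that freeness of the $H$-action on $2$-cells would give a length-$2$ free resolution contradicting the infinite cohomological dimension of $H$, yields only \emph{some} nontrivial element of $H$ fixing a $2$-cell, not $x$ itself. The Smith theory argument above repairs this by exploiting freeness of $G$ on the $1$-skeleton to force each $\mathrm{Fix}(H_p)$ to be a single point, after which the abelian structure of $H$ gives a common fixed barycenter. A further delicate point appears in step three: isolating the precise power $z_r^{q_r/s}$ rather than another order-$s$ generator of $\langle g z_r g^{-1}\rangle$ requires care with the cyclic ambiguity in the choice of primitive root $z_r$ of $r$.
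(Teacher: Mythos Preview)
The paper does not prove this statement; it merely quotes it from Huebschmann \cite{huebschmann1979cohomology}, whose original argument is cohomological (via the structure of the second homotopy module of the presentation $2$-complex). So there is nothing in the paper to compare your Smith-theoretic approach against, and your outline is genuinely different from Huebschmann's route. Your Step~2 is a nice idea and is essentially correct. Two points, however, need repair.

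\textbf{Step 1 does not give contractibility of $\tilde K$.} If some relator is a proper power $r=z_r^{q_r}$ with $q_r>1$ --- precisely the case in which the theorem has content --- the ordinary Cayley complex (the universal cover of the presentation complex) contains $q_r$ distinct $2$-cells glued along the same boundary circuit, so $\pi_2(\tilde K)\neq 0$. The ``asphericity'' in Huebschmann's sense (and in the paper's usage, which is inherited by small cancellation presentations with torsion) is \emph{diagrammatic} asphericity, not $\pi_2=0$. The fix is standard: pass to the reduced complex obtained by identifying those $q_r$ parallel $2$-cells; diagrammatic asphericity then yields contractibility of this quotient, the $G$-action descends, and the stabilizer of a $2$-cell is exactly the cyclic group $\langle g z_r g^{-1}\rangle$ of order $q_r$ you describe. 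After a barycentric subdivision (so that cell stabilizers fix their cells pointwise) your Smith-theory argument in Step~2 goes through on this complex.

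\textbf{The normalization $m=1$ in Step 3 is impossible in general.} A cyclic rotation of $z_r$ replaces $z_r$ by a conjugate, hence replaces $z_r^{q_r/s}$ by a conjugate, leaving its $G$-conjugacy class unchanged; inversion buys you only $m\mapsto -m$. In fact the conclusion with the specific exponent $q_r/s$ is false: the one-relator presentation $\langle a,b\mid (ab)^5\rangle$ is (diagrammatically) aspherical, the group is $\mathbb{Z}\ast\mathbb{Z}/5$, and $(ab)^2$ has order $5$ but is \emph{not} conjugate to $(ab)^{5/5}=ab$. What your argument actually establishes --- and what Huebschmann's theorem in fact asserts --- is that $x$ is conjugate into $\langle z_r\rangle$, i.e.\ $x$ is conjugate to $z_r^{(q_r/s)m}$ for some $m$ coprime to $s$. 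That weaker statement is exactly what the present paper uses downstream (``each of its elements is conjugate to a root of some relator''), so the over-precise exponent in the quoted statement is harmless for the application.
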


\begin{proof}[Proof of Theorem \ref{thm:tA}]
Let $Cayley(G,X,R)$ be either a \cttss, a \cftfs or a \css complex.
Assume that $H$ is a torsion subgroup of $G$.
The action of the group $G$ on the $0$-skeleton of the Cayley complex is free. As the $1$-skeleton of $Cayley(G,X,R)$ is a directed graph, the action of $G$ on the $1$-skeleton is free. This property is inherited by any subgroup of $G$, in particular $H$. Since $H$ is a torsion group, by Theorem \ref{hueth3} each of its elements is conjugate to a root of some relator. Obviously, a root of a relator fixes the $2$-cell corresponding to this relator in the Cayley complex, therefore the action of $H$ on $Cayley(G,X,R)$ is locally elliptic. It follows from Theorem \ref{thm:tB} that $H$ is a finite cyclic group. 
\end{proof}

\bibliographystyle{alpha}
\bibliography{references}{}

\newcommand{\etalchar}[1]{$^{#1}$}
\begin{thebibliography}{CCG{\etalchar{+}}20}

\bibitem[AJ77]{ALj77}
Mohammed A.-R. Al-Janabi.
\newblock The sq universality of some small cancellation groups, 1977.
\newblock PhD thesis, Queen Mary College.

\bibitem[BH09]{bri09}
Martin Bridson and André Haefliger.
\newblock {\em Metric Spaces of Non-Positive Curvature}, volume 319.
\newblock 01 2009.

\bibitem[CCG{\etalchar{+}}20]{Helly}
J{\'e}r{\'e}mie Chalopin, Victor Chepoi, Anthony Genevois, Hiroshi Hirai, and
  Damian Osajda.
\newblock {H}elly groups.
\newblock 2020.
\newblock \url{https://arxiv.org/abs/2002.06895}.

\bibitem[CO15]{CO15}
Victor Chepoi and Damian Osajda.
\newblock Dismantlability of weakly systolic complexes and applications.
\newblock {\em Trans. Amer. Math. Soc.}, 367(2):1247--1272, 2015.

\bibitem[Col73]{Col73}
Donald~J. Collins.
\newblock Free subgroups of small cancellation groups.
\newblock {\em Proc. London Math. Soc. (3)}, 26:193--206, 1973.

\bibitem[EH88]{EH88}
Martin Edjvet and James Howie.
\newblock Star graphs, projective planes and free subgroups in small
  cancellation groups.
\newblock {\em Proceedings of the London Mathematical Society},
  s3-57(2):301--328, 1988.

\bibitem[Gen21]{genevois2016coningoff}
Anthony Genevois.
\newblock {Coning-off {CAT}(0) cube complexes}.
\newblock {\em Annales de l'Institut Fourier}, 71(4):1535--1599, 2021.

\bibitem[Hat00]{AH}
Allen Hatcher.
\newblock {\em {Algebraic topology}}.
\newblock Cambridge Univ. Press, Cambridge, 2000.

\bibitem[HO21]{HaeOsa}
Thomas Haettel and Damian Osajda.
\newblock {Locally elliptic actions, torsion groups, and nonpositively curved
  spaces}.
\newblock 2021.
\newblock \url{https://arxiv.org/abs/2110.12431}.

\bibitem[Hod20]{hoda2019quadric}
Nima Hoda.
\newblock {Quadric Complexes}.
\newblock {\em Michigan Mathematical Journal}, 69(2):241 -- 271, 2020.

\bibitem[Hue79]{huebschmann1979cohomology}
Johannes Huebschmann.
\newblock Cohomology theory of aspherical groups and of small cancellation
  groups.
\newblock {\em Journal of Pure and Applied Algebra}, 14(2):137--143, 1979.

\bibitem[JSk06]{TJJS}
Tadeusz Januszkiewicz and Jacek \'Swi~\k{a}tkowski.
\newblock Simplicial nonpositive curvature.
\newblock {\em Publications Math\'ematiques de l'Institut des Hautes \'Etudes
  Scientifiques}, 104(1):1--85, Nov 2006.

\bibitem[KMV22]{keppeler2021automatic}
Daniel Keppeler, Philip Möller, and Olga Varghese.
\newblock Automatic continuity for groups whose torsion subgroups are small.
\newblock {\em Journal of Group Theory}, 25(6):1017--1043, 2022.

\bibitem[LS01]{ls}
Roger~C. Lyndon and Paul~E. Schupp.
\newblock {\em Combinatorial group theory}.
\newblock Classics in Mathematics. Springer-Verlag, Berlin, 2001.
\newblock Reprint of the 1977 edition.

\bibitem[LV20]{leder_varghese}
Nils Leder and Olga Varghese.
\newblock A note on locally elliptic actions on cube complexes.
\newblock {\em Innov. Incidence Geom.}, 18(1):1--6, 2020.

\bibitem[MW02]{MW}
Jonathan~P. McCammond and Daniel~T. Wise.
\newblock {Fans and Ladders in Small Cancellation Theory}.
\newblock {\em Proceedings of the London Mathematical Society}, 84(3):599--644,
  2002.

\bibitem[NOP22]{NOP-D}
Sergey Norin, Damian Osajda, and Piotr Przytycki.
\newblock {Torsion groups do not act on 2-dimensional $\mathrm{CAT}(0)$
  complexes}.
\newblock {\em Duke Mathematical Journal}, 171(6):1379 -- 1415, 2022.

\bibitem[OP18]{OP18}
Damian Osajda and Tomasz Prytuła.
\newblock Classifying spaces for families of subgroups for systolic groups.
\newblock {\em Groups Geom. Dyn.}, 12(3):1005--1060, 2018.

\bibitem[OP21]{OsPrz21}
Damian Osajda and Piotr Przytycki.
\newblock Tits alternative for groups acting properly on 2-dimensional
  recurrent complexes.
\newblock {\em Adv. Math.}, 391:Paper No. 107976, 22, 2021.
\newblock With an appendix by J. McCammond, Osajda and Przytycki.

\bibitem[OP22]{OsPrz22}
Damian Osajda and Piotr Przytycki.
\newblock Tits {A}lternative for 2-dimensional {CAT}(0) complexes.
\newblock {\em Forum Math. Pi}, 10, 2022.

\bibitem[Pri88]{Pride}
Stephen~J. Pride.
\newblock Star-complexes, and the dependence problems for hyperbolic complexes.
\newblock {\em Glasgow Mathematical Journal}, 30(2):155–170, 1988.

\bibitem[Prz08]{przytycki_2008}
Piotr Przytycki.
\newblock The fixed point theorem for simplicial nonpositive curvature.
\newblock {\em Mathematical Proceedings of the Cambridge Philosophical
  Society}, 144(3):683–695, 2008.

\bibitem[Sag95]{Sageev1995}
Michah Sageev.
\newblock Ends of group pairs and non-positively curved cube complexes.
\newblock {\em Proc. London Math. Soc. (3)}, 71(3):585--617, 1995.

\bibitem[Ser03]{Serre_Trees}
Jean-Pierre Serre.
\newblock {\em Trees}.
\newblock Springer Monographs in Mathematics. Springer-Verlag, Berlin, 2003.
\newblock Translated from the French original by John Stillwell, Corrected 2nd
  printing of the 1980 English translation.

\bibitem[SW05]{SW2005}
Michah Sageev and Daniel~T. Wise.
\newblock The {T}its alternative for {${\rm CAT}(0)$} cubical complexes.
\newblock {\em Bull. London Math. Soc.}, 37(5):706--710, 2005.

\bibitem[Swe99]{Swe99}
Eric~L. Swenson.
\newblock A cut point theorem for {${\rm CAT}(0)$} groups.
\newblock {\em J. Differential Geom.}, 53(2):327--358, 1999.

\bibitem[Wis03]{wise}
Daniel~T. Wise.
\newblock Sixtolic complexes and their fundamental groups, 2003.
\newblock Unpublished manuscript.

\end{thebibliography}
\end{document}